\DeclareSymbolFont{cyrletters}{OT2}{wncyr}{m}{n}
\DeclareMathSymbol{\Sha}{\mathalpha}{cyrletters}{"58}
\newtheorem{theoA}{Theorem}
\newtheorem*{coro*}{Corollary}
\newtheorem*{conj*}{Conjecture}
\newtheorem*{lemm*}{Lemma}
\providecommand{\smalltwomat}[4]{\left(\begin{smallmatrix}#1&#2\\#3&#4\end{smallmatrix}\right)}
\theoremstyle{definition}
\theoremstyle{remark}
\newtheorem{remark*}{Remark}
\numberwithin{equation}{section}
\numberwithin{table}{section}
\newcommand{\ot}{\otimes}
\newcommand{\ts}{\times}
\newcommand{\cd}{\cdot}
\newcommand{\beq}{\begin{equation}\begin{aligned}}
\newcommand{\eeq}{\end{aligned}\end{equation}}
\newcommand{\beqq}{\begin{equation*}\begin{aligned}}
\newcommand{\eeqq}{\end{aligned}\end{equation*}}
\newcommand{\lb}[1]{\label{#1}}
\newcommand{\nek}{Nekov{\'a}{\v{r}}}
\newcommand{\one}{\mathbf{1}}
\newcommand{\Q}{\mathbf{Q}}
\newcommand{\qqq}{\mathbf{q}}
\newcommand{\GL}{\mathrm{GL}}
\newcommand{\G}{\mathrm{G}}
\renewcommand{\H}{\mathrm{H}}
\newcommand{\X}{\mathscr{X}}
\newcommand{\R}{\mathbf{R}}
\newcommand{\Z}{\mathbf{Z}}
\newcommand{\frakm}{\mathfrak{m}}
\newcommand{\into}{\hookrightarrow}
\newcommand{\lan}{\langle}
\newcommand{\ran}{\rangle}
\newcommand{\al}{\alpha}
\newcommand{\lm}{\lambda}
\newcommand{\Lm}{\Lambda}
\newcommand{\sg}{\sigma}
\newcommand{\Sg}{\Sigma}
\newcommand{{\calG}}{\mathscr{G}}
\newcommand{\A}{\mathbf{A}}
\newcommand{\bks}{\backslash}
\newcommand{\eps}{\varepsilon}
\newcommand{\vphi}{\varphi}
\newcommand{\vth}{\vartheta}
\newcommand{\ord}{\mathrm{ord}} 
\newcommand{\vol}{\mathrm{vol}}
\newcommand{\Tr}{\mathrm{Tr}}
\newcommand{\Gal}{\mathrm{Gal}}
\newcommand{\Ch}{\mathrm{Ch}}
\newcommand{\Ker}{\mathrm{Ker}\,}
\newcommand{\Hom}{\mathrm{Hom}\,}
\newcommand{\End}{\mathrm{End}\,}
\newcommand{\llb}{\llbracket}
\newcommand{\rrb}{\rrbracket}
\newcommand{\Spec}{\mathrm{Spec}\,}
\renewcommand{\r}[1]{\mathrm{#1}}
\newcommand{\s}[1]{\mathscr{#1}}
\newcommand{\ol}[1]{\overline{#1}{}}
\newcommand{\ul}{\underline}
\renewcommand{\geq}{\geqslant}
\newcommand{\rB}{\r B}
\newcommand{\rG}{\r G}
\newcommand{\rH}{\r H}
\newcommand{\rM}{\r M}
\newcommand{\rN}{\r N}
\newcommand{\rP}{\r P}
\newcommand{\rT}{\r T}
\newcommand{\rU}{\r U}
\newcommand{\cyc}{\textstyle\circ}
\newcommand{\rc}{\r c}
\newcommand{\rd}{\,\r d}
\newcommand{\rt}{\r t}
\newcommand{\sA}{\s A}
\newcommand{\sH}{\s H}
\newcommand{\sK}{\s K}
\newcommand{\sO}{\s O}
\newcommand{\sR}{\s R}
\newcommand{\sS}{\s S}
\newcommand{\sV}{\s V}
\newcommand{\sX}{\s X}
\newcommand{\Qpb}{\ol{\Q}_{p}}
\newsavebox\tempbox
\let\svwidetilde\widetilde
\renewcommand\widetilde[1]{\sbox\tempbox{$#1$}\svwidetilde{\usebox{\tempbox}}}
   \def\XXint#1#2#3{{\setbox0=\hbox{$#1{#2#3}{\int}$}
        \vcenter{\hbox{$#2#3$}}\kern-.5\wd0}}
   \newcommand{\Herm}{\r{Herm}}
\title{Theta cycles 
and the Beilinson--Bloch--Kato conjectures}
\author{Daniel Disegni} 
\address{Department of Mathematics, Ben-Gurion University of the Negev, Be'er Sheva 84105, Israel}
\address{Aix-Marseille University, CNRS, I2M - Institut de Math\'ematiques de Marseille, campus de Luminy, 13288 Marseille, France}
\email{daniel.disegni@univ-amu.fr}
\begin{document}

\begin{abstract}
We introduce `canonical' classes in the Selmer groups of certain Galois representations with a conjugate-symplectic symmetry. They are images of  special cycles in unitary Shimura varieties, and defined uniquely up to a scalar. The construction is a slight refinement of one of Y. Liu, based on the conjectural modularity of Kudla's  theta series of special cycles. For $2$-dimensional representations, Theta cycles are (the Selmer images of) Heegner points. In general, they conjecturally exhibit an analogous strong  relation with the  Beilinson--Bloch--Kato conjectures  in rank~$1$, for which we gather the available  evidence. 
\end{abstract}

\thanks{Research supported by ISF grant 1963/20 and BSF grant 2018250. This work was partly written while the author was in residence at MSRI/SLMath (Berkeley, CA), supported by NSF grant DMS-1928930.}

\maketitle

\tableofcontents

 
 \section{Introduction}
 
 The purpose of this largely expository note is to introduce certain  Selmer classes of algebraic cycles,
 and discuss their relation to the Beilinson--Bloch--Kato (BBK) conjectures. These classes, called \emph{Theta cycles}, should play an analogous role to Heegner points on elliptic curves, in that the Bloch--Kato Selmer group $H^{1}_{f}(E, \rho)$ of a relevant Galois representation $\rho$ should be $1$-dimensional precisely when its Theta cycle is nonzero (cf. \cite{BST, Kim} and references therein for the case elliptic curves). Moreover, the  BBK conjectures, reviewed in  \S~\ref{sec 2}, predict that the $1$-dimensionality of the Selmer group is equivalent to the (complex or, for suitable primes, $p$-adic) $L$-function  of $\rho$ vanishing to\c order~$1$ at the center, and Theta cycles allow to approach this conjecture.

 \bigskip
 
The following theorem summarises the state of our knowledge on the topic.   Unexplained notions  or loose formulations  will be defined and made precise in the main body of the paper. 

We fix a rational prime $p$ and  denote by $\Q^{\cyc}\subset \Qpb$ the extension of $\Q$ generated by all roots of unity, and we fix an embedding $\iota^{\cyc}\colon \Q^{\cyc}\into \mathbf{C}$. We set $\Sg\coloneqq \{\iota\colon \Qpb\into \mathbf{C}\ | \ \iota_{|\Q^{\cyc}}=\iota^{\cyc}\}$. 

\begin{theoA} \lb{main} Let $E$ be a CM field with  Galois group $G_{E}$, and let 
$$\rho\colon G_{E}\to \GL_{n}(\Qpb)$$
 be an irreducible, geometric Galois representation of weight $-1$ and even dimension $n$. Suppose that $\rho$ is conjugate-symplectic, automorphic, and  has minimal regular Hodge--Tate weights. 
 
If $n\geq 4$, assume that the maximal totally real subfield $F$ of $E$ is not $\Q$, and that Hypothesis \ref{hyp coh} on the cohomology of  unitary Shimura varieties holds.

 \begin{enumerate}
 \item   Assume Hypothesis \ref{hyp mod} on the modularity of generating series of special cycles. The construction of \S~\ref{const} attaches to $\rho $ a pair $(\Lm_{\rho}, \Theta_{\rho})$, well-defined up to isomorphism, consisting of a $\Qpb$-line  $\Lm_{\rho}$ together with a $\Qpb$-linear map
 $$\Theta_{\rho}\colon \Lm_{\rho} \to H^{1}_{f}(E, \rho),$$
  whose image is spanned by classes of algebraic cycles.
  \item Suppose that $E$ and $\rho$ are `mildly ramified' and that $\rho$ is crystalline at $p$-adic places. 
\begin{enumerate}
\item Assume Hypothesis \ref{hyp mod}, as well as   Conjecture \ref{ass infty} on the injectivity of certain Abel--Jacobi maps, and that $p$ is unramified in  $E$.  For any     $\iota\in \Sg$, 
denote by  $ L_{\iota}(\rho,s)$ the   complex $L$-function of $\rho$ with respect to~$\iota$. Then\footnote{The order of vanishing of $L_{\iota}(\rho, s)$ at $s=0$ is conjecturally independent of $\iota$, cf. Conjecture \ref{bbk}.}
  $$\ord_{s=0} L_{\iota}(\rho,s) =1\ \Longrightarrow\  \Theta_{\rho}\neq 0.$$
\item Suppose that $E/F$ is totally split above $p$, that $p>n$, and that for every place $w\vert p$ of $E$, the representation $\rho_{w}$ is Panchishkin--ordinary. Denote by $\sX_{F}$ the $\Qpb$-scheme of continuous $p$-adic characters of $G_{F}$ that are unramified outside $p$, by $\frak m\subset\sO(\sX_{F})$ the ideal of functions vanishing at $\one$, and by $L_{p}(\rho)\in \sO(\sX_{F})$  the $p$-adic $L$-function of $\rho$. Then 
$$\ord_{\frak m} L_{p}(\rho) =1\ \Longrightarrow\ \text{Hypothesis \ref{hyp mod} holds and\ } \Theta_{\rho}\neq 0.$$
 \end{enumerate}
\item Assume Hypothesis \ref{hyp mod} and  that $\rho$ has `sufficiently large' image. Then 
$$\Theta_{\rho}\neq 0\  \Longrightarrow\ \dim_{\Qpb} H^{1}_{f}(E, \rho)=1.$$
 \end{enumerate}
\end{theoA}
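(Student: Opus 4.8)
The plan is to deduce the Selmer bound of part~(3) by a Kolyvagin-style Euler system argument applied to the special cycles underlying $\Theta_\rho$, adapting to the present situation the methods developed for higher-dimensional unitary Shimura varieties by Y.~Liu and by Liu--Tian--Xiao--Zhang--Zhu. Fix a generator $t\in\Lm_\rho$, so that $c_0:=\Theta_\rho(t)\neq0$ in $H^1_f(E,\rho)$ and, being a class of algebraic cycles, $c_0$ already satisfies the Bloch--Kato local conditions at every place. The conjugate-symplectic symmetry identifies $\rho$, up to a finite-order twist, with its Tate-twisted conjugate-dual; the relevant local Bloch--Kato subspaces are then maximal isotropic for the local Tate pairings, so that $\dim_{\Qpb}H^1_f(E,\rho)\leq1$ is a \emph{self-dual} bound and one is in the ``odd'' case of Howard's formalism of bipartite Euler systems. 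I would then use Hypothesis~\ref{hyp coh} --- the concentration in the middle degree of the $\rho$-localised cohomology of the relevant unitary Shimura varieties, its freeness over the Hecke algebra, and an Ihara-type control of torsion --- together with level-raising congruences at auxiliary (``admissible'') primes $\ell$ to build a family of classes $c_{\f n}\in H^{1}(E,T/\varpi^{M}T)$, for $T$ a Galois-stable lattice in a finite-extension model of $\rho$ with uniformiser $\varpi$ and $\f n$ a squarefree product of admissible primes: here $c_{\varnothing}$ is a multiple of $c_0$, each $c_{\f n}$ is unramified outside $p\f n$ and ``geometric'' at $p$, satisfies the Selmer condition at all $v\nmid\f n$, and is ramified in a prescribed one-dimensional way at each $\ell\mid\f n$.

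The arithmetic heart of the matter is a pair of explicit reciprocity laws. The \emph{first} identifies, up to a nonzero scalar, the unramified residue $\partial_\ell(c_{\f n\ell})$ with the singular part at $\ell$ of $c_{\f n}$; the \emph{second} identifies the finite localisation of $c_{\f n\ell}$ at $\ell$ with a period of the automorphic form underlying $\rho$, transferred to a totally definite inner form of the ambient unitary group. Both would be proved by computing the reduction modulo $\ell$ of Kudla's special cycles via the Rapoport--Zink uniformisation of the basic locus and the action of the Hecke operator at $\ell$, in the tradition of Bertolini--Darmon, \nek{} and Liu. Since $c_0\neq0$, a multiplicity-one and non-degeneracy statement for the relevant definite-side periods --- coming from the local theta correspondence and the automorphy hypothesis on $\rho$ --- shows that the residues and periods in question are nonzero for a positive-density set of admissible $\ell$, located by Chebotarev inside the finite extension of $E$ cut out by $\rhobar$ and by the classes under consideration; this is where the ``sufficiently large'' image hypothesis enters decisively.

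Granting the above, the descent is Kolyvagin's. Suppose $\dim_{\Qpb}H^1_f(E,\rho)\geq2$; then for $M$ large there is a Selmer class $s\in H^1_f(E,T/\varpi^{M}T)$ whose reduction modulo $\varpi$ is not proportional to that of $c_0$, and by Chebotarev and large image one finds an admissible prime $\ell$ with $\mathrm{loc}_\ell(s)\neq0$ while $\partial_\ell(c_\ell)$ has maximal order (controlled by $c_0$). Since $s$ is everywhere Selmer and the Bloch--Kato subspaces are their own orthogonal complements under the local pairings, the global reciprocity law $\sum_v\mathrm{inv}_v(s\cup c_\ell)=0$ collapses to $\langle\mathrm{loc}_\ell(s),\partial_\ell(c_\ell)\rangle=0$, which forces $\mathrm{loc}_\ell(s)$ to be highly $\varpi$-divisible, contradicting the choice of $\ell$ once $M$ is large. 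Running this over a suitable finite family of admissible primes shows that the Selmer group is spanned by $c_0$, that is $\dim_{\Qpb}H^1_f(E,\rho)=1$.

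The main obstacle is the first reciprocity law, and upstream of it Hypothesis~\ref{hyp coh}. Proving the reciprocity law requires an exact description of the intersection of Kudla's special cycles with the basic locus of a unitary Shimura variety reduced at an admissible prime, of its interplay with the Hecke action (a level-raising computation), and a matching of the outcome with the definite-side period --- exactly the point at which the geometry specific to theta series of special cycles, and the coherence of the construction across varying levels furnished by the modularity Hypothesis~\ref{hyp mod}, must be brought to bear. The cohomological input itself --- middle-degree concentration, Hecke-freeness, an Ihara-type lemma and torsion vanishing --- is already a substantial problem for general $n$, which is why it is isolated as a standing hypothesis rather than established here.
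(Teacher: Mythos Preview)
Your proposal addresses only part~(3); parts~(1) and~(2) are handled in the paper by the construction of \S\ref{const} and by the height formulas of Theorem~\ref{ht f} (reducing to \cite{LL,LL2,DL}), and you do not touch these.

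For part~(3), your route is genuinely different from the paper's. You propose a \emph{bipartite} Euler system in the sense of Howard, built from level-raising congruences at admissible primes and explicit reciprocity laws tying the singular residues of classes to periods on a totally definite inner form --- the Bertolini--Darmon strategy as generalised in \cite{LTXZZ}. The paper instead constructs an \emph{anticyclotomic} Euler system of Jetchev--\nek--Skinner type (Theorem~\ref{ES}): a norm-compatible family of classes over ring class fields of $E$, with $\Theta_{\rho}(\lm)$ as base class, and then invokes the general JNS machinery to conclude. The crucial technical point in the paper's approach is that the \emph{tame norm relations} are proved by multiplicity-one for the one-dimensional space $\Lm_{\rho}$, not by any basic-locus geometry; this is exactly the advantage highlighted at the end of \S\ref{sec: euler}. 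Your approach, by contrast, would buy an argument entirely in characteristic~$0$ over $E$ (no ring class fields), at the cost of hard mod-$\ell$ geometry of Kudla's special cycles that is not currently available.

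There is also a concrete gap. You invoke Hypothesis~\ref{hyp coh} as if it supplied integral control --- middle-degree concentration after localisation, freeness over the Hecke algebra, an Ihara-type lemma, and torsion vanishing. It does not: Hypothesis~\ref{hyp coh} is a purely $\Qpb$-linear Hecke--Galois decomposition of $M_{\rho,K}$, and says nothing about lattices, torsion, or congruences. A bipartite argument of \cite{LTXZZ} type genuinely needs those integral inputs (they are substantial standing hypotheses in \emph{loc.\ cit.}), so your scheme requires strictly more than what Theorem~\ref{main} assumes. Moreover, the first reciprocity law you describe --- matching the reduction of Kudla's $Z(x)_{K}$ on the basic locus with a definite-side theta period --- is not in the literature for these cycles; the analogue in \cite{LTXZZ} is for Gan--Gross--Prasad diagonal cycles, whose supersingular geometry is quite different. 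So while your outline is a plausible research programme, as a proof of part~(3) under the stated hypotheses it is incomplete: the paper's JNS route avoids both the extra integral hypotheses and the unproved basic-locus input.
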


Examples of representations $\rho$ satisfying the general assumptions of the theorem arise from  symmetric powers of elliptic curves: namely, if $A$ is a modular elliptic curve over $F$ with rational Tate module $V_{p}A$,  then by \cite{NT} one may consider the natural representation $\rho_{A, n}$ of $G_{E}$ on ${\rm Sym}^{n-1}V_{p}A_{E}(1-n/2)$ (see \cite[\S~1.4]{DL} for more details); in particular, for $n=2$ we obtain the representation $V_{p}A_{E}$ already studied (when $F=\Q$) by Gross--Zagier, Perrin-Riou and Kolyvagin in the 1980s.

\medskip

Part 1 of the theorem, which builds on constructions of Kudla and Y. Liu, is the main focus of this note; it is explained in \S~\ref{sec 4}, after reviewing the representation-theoretic preliminaries in \S~\ref{sec 3}. The construction is canonical up to a  representation-theoretic choice described in Remark \ref{can psi}. (However, there is  a `standard' choice, and part 3 of the theorem indicates that this ambiguity is quite innocuous.)

 In \S~\ref{sec 5}, we state a pair of formulas for the Bloch--Be\u\i linson and the \nek\ heights of Theta cycles, which are essentially reformulations of a breakthrough result of Li and Liu \cite{LL, LL2}, and of its $p$-adic analogue by Liu and the author \cite{DL}.   They imply the assertions of Part 2, and  take the  shape  
 $$\lan \Theta_{\rho} (\lm),
\Theta_{\rho^{*}(1)}^{}(\lm')\ran_{\star} 
= {c_{\star}\cd L_{\star}'(\rho, 0)}
\cd \zeta_{\star}(\lm, \lm'), 
$$
where `$\star$' stands for the relevant decorations,  $c_{\star}$ are constants, and $\zeta_{\star}$ are  canonical trivialisations of $\Lambda_{\rho}\ot \Lm_{\rho^{*}(1)}$.
 
Part 3 is the subject of  \cite{D-euler} (itself relying on forthcoming work of Jetchev--\nek--Skinner), on which we only give some brief remarks in \S~\ref{sec: euler}; in particular, we sketch the relevance of the perspective proposed here for the results obtained there.

\medskip

All the constructions and results should have analogues in the odd-dimensional case, in the symplectic case, and for more general Hodge--Tate types. We hope to return to some of these topics in future work.

  \subsection*{Acknowledgements} 
It will be clear to the reader that this note is  little more than an attempt to look from the Galois side, and the multiplicity-one side, at ideas of Kudla and Liu. I would like to thank Yifeng Liu for all I have learned from him during  our collaboration, and  Elad Zelingher for a remark that sparked it. I am also grateful to Yannan Qiu and Eitan Sayag for helpful conversations or correspondence, and to Chao Li and Yifeng Liu for many useful comments on a first draft. 
  
  This text is based on a talk given at the Second JNT Biennial Conference in Cetraro, Italy, in July 2022, and I would like  to thank the organisers for the opportunity to speak there. One of the participants reminded me of Tate's similarly named  `$\theta$-cycles' in the theory of mod-$p$ modular forms \cite[\S~7]{Jo}: besides the context, the capitalisation should also dispel any risk of confusion. Homonymous objects also occur in neuroscience, in connection with a pattern of brain activity typical of ``a drowsy state transitional from wake to sleep'' \cite[pp. 60-61]{McN}; I am grateful to the Cetraro audience for not indulging in this confusion either.

 \section{The conjecture of Be\u\i linson--Bloch--Kato--Perrin-Riou}
 \lb{sec 2}
Let $E$ be a number field with Galois group $G_{E}$,
 and let 
$$\rho\colon G_{E}\to \GL_{n}(\Qpb)$$
 be an irreducible Galois representation that is geometric in the sense of  \cite{FM95}, and pure of weight $-1$ at all finite places (in the sense of \cite[Definition A.11]{DL} -- where at non-$p$-adic places, we take the functor of \cite[(4.2.1)]{tate-nt} in place of the functor {${\rm WD}(\cdot )$ of \emph{loc. cit.}).

\begin{enonce*}[remark]{Example} The Galois representations attached to modular (eigencusp)forms are geometric and pure, see \cite{Saito1, Saito2}; the weight depends on the choice of normalisation, but if the modular form has even weight, a suitable cyclotomic twist of its Galois representation has weight~$-1$.
\end{enonce*}

\subsection{Chow and Selmer groups} A typical source of representations as above is the cohomology of algebraic varieties. 
In fact, define a \emph{motivation} of $\rho$ to be  an element of\footnote{Throughout this paper, if $R\to R'$ is a ring map that can be understood from the context, and $X$ is an $R$-scheme or an $R$-module, we write $X_{R'}\coloneqq X\ot_{R}R'$.}
$${\r{Mot}}_{\rho}\coloneqq \varinjlim_{(X, k)} \r{Mot}_{\rho}(X, k),
\qquad \text{where }\quad 
\r{Mot}_{\rho}(X, k)\coloneqq 
 \Hom_{\Qpb[G_{E}]} (H_{\text{\'et}}^{2k-1}(X_{\ol{E}}, \Qpb(k)), \rho),$$
and the limit runs over all pairs consisting of a smooth proper variety $X_{/E}$ and an integer $k\geq 1$ (this is a directed system by K\"unneth's formula). 
We refer to
 $(X, k)$ as a source of $f \in {\r{Mot}}_{\rho}$ if $f$ is in the image of $\r{Mot}_{\rho}(X, k)$. We say that $\rho $ is \emph{motivic} if ${\r{Mot}}_{\rho}$ is nonzero. 
 According to the conjecture of Fontaine--Mazur,  every geometric irreducible Galois representation is motivic.

To a  representation $\rho$ as above is attached its Bloch--Kato  \cite{BK} Selmer group $H^{1}_{f}(E, \rho)$.\footnote{N.B.: the subscript $f$ has nothing to do with names of objects elsewhere in this text. Galois cohomology and Selmer groups are usually defined for representations with coefficients in finite extensions of $\Q_{p}$. However, it is well-known that we can write $\rho=\rho_{0}\ot_{L}\Qpb$ for some finite extension $L\subset \Qpb$ of $\Q_{p}$ and some representation $\rho_{0}\colon G_{E}\to \GL_{n}(L)$ (and similarly for the other representations considered in this paper). Then we define $H^{1}_{f}(E, \rho)\coloneqq  H^{1}_{f}(E, \rho_{0})\ot_{L}\Qpb$.} To a
variety $X_{/E}$ as above is attached  its Chow group  $\Ch^{k}(X)$ of  codimension-$k$ algebraic cycles on $X$ up to rational equivalence (with coefficients in $\Q$). A central object of arithmetic interest is its subgroup $\Ch^{k}(X)^{0}_{\Qpb}\coloneqq  \Ker [\Ch^{k}(X)_{}\to H^{2k}_{\text{\'et}}(X_{\ol{E}}, \Qpb(k))]$ (where the map is the cycle class). It is endowed with an Abel--Jacobi map 
$$\r{AJ}\colon \Ch^{k}(X)^{0}_{\Qpb} \to  H^{1}(E,  H_{\text{\'et}}^{2k-1}(X_{\ol{E}}, \Qpb(k)))$$
 (see \cite[\S~5.1]{nek-height}) whose image is conjectured to land in  $H^{1}_{f}(E,  H_{\text{\'et}}^{2k-1}(X_{\ol{E}}, \Qpb(k)))$.
We can define an analogue of the image of $\r{AJ}$ for the representation $\rho$ 
by
$$H^{1}_{f}(E, \rho)^{\r{mot}} \coloneqq  \sum_{f'\in \r{Mot}_{\rho}} f'_{*}{\r{AJ}}(\Ch^{k}(X)_{\Qpb}^{0}) \cap H^{1}_{f}(E, \rho)
\subset H^{1}_{f}(E, \rho),$$
where we have denoted by $(X, k)$  any source of the motivation  $f'$. 
By an evocative abuse of nomenclature, we refer to elements of $H^{1}_{f}(E, \rho)^{\r{mot}}$ as cycles. 
\begin{rema} If $\rho=  H_{\text{\'et}}^{2k_{0}-1}(X_{0,\ol{E}}, \Qpb(k_{0}))$ for a variety $X_{0}$ and an integer $k_{0}$, then we expect that $H^{1}_{f}(E, \rho)^{\r{mot}}={\r{AJ}}(\Ch^{k_{0}}(X_{0})_{\Qpb}^{0})$. This equality is implied by the Tate conjecture \cite[Conjecture 1]{Tat65} for $X\times X_{0}$.
\end{rema}

\subsection{The conjecture} 
We say  that $\rho$ is  (Panchishkin-) \emph{ordinary} (see \cite[\S~6.7]{nek-height}, \cite[\S~2.3.1]{PR-htIw} for more details) if for each place $w\vert p$, there is a (necessarily unique) exact sequence of De Rham $G_{E_{w}}$-representations $0\to \rho_{w}^{+} \to \rho_{|G_{E_{w}}}\to \rho_{w}^{-}\to 0$, such that $\r{Fil}^{0}\mathbf{D}_{\r{dR}}(\rho_{w}^{+}) = \mathbf{D}_{\r{dR}}(\rho_{w}^{-})/{\r{Fil}}^{0}=0$.  For any subfield $F\subset E$, let  
$$\X_{F}\coloneqq \Spec \Z_{p}\llb \Gal(F_{\infty}/F)\rrb\ot_{\Z_{p}} \Qpb, $$ 
where $F_{\infty}/F$ is the abelian extension with $\Gal(F_{\infty}/F)$ isomorphic (via class field theory) to the maximal $\Z_{p}$-free quotient of $F^{\ts}\bks \A_{F}^{\ts} / \widehat{\sO_{F}}^{p, \ts}$. 

One can conjecturally attach to $\rho$ entire $L$-functions $$L_{\iota}(\rho, s)$$ for ${\iota\colon L\into\mathbf{C}}$ and, (at least) if $\rho$ is ordinary,  a $p$-adic $L$-function $$L_{p}(\rho) \in \sO(\sX_{F})$$ interpolating suitable modifications of the $L$-values $L_{\iota}(\rho\ot\chi_{|G_{E}}, 0)$
 for finite-order characters $\chi\in  \sX_{F}$ (see \cite{PRbook}, at least when  taking $F=\Q$). 
 
 Denote by $\frakm=\frakm_{F}\subset \sO(\sX_{F})$ the maximal ideal of functions vanishing at the character $\one$ of $\Gal(F_{\infty}/F)$, and by $\ord_{\frakm}$ the corresponding valuation. The integer ${\ord}_{\frakm} L_{p}(\rho)$ is conjecturally independent of the choice of $F$.
 
\begin{conj}[{Be\u\i linson}, Bloch--Kato, Perrin-Riou {\cite{bei, BK, PRbook}}]  \lb{bbk} Let $\rho\colon G_{E}\to \GL_{n}(\Qpb)$ be an irreducible geometric representation of weight $-1$. Let $r\geq 0$ be an integer.   The following conditions are equivalent:
\begin{enumerate}
\item[$\text{(a)}_{\infty}$] for any $\iota\colon \Qpb\into \mathbf{C}$, we have $${\ord}_{s=0} L_{\iota}(\rho,s) = r;$$
\item[$\text{(b)}_{\phantom\infty}$] $\dim_{\Qpb} H^{1}_{f}(E, \rho)^{\r{mot}} =\dim_{\Qpb} H^{1}_{f}(E, \rho) = r$.
\end{enumerate}
If moreover  $\rho$ is ordinary and  $\rho_{w}^{+, *}(1)^{G_{E_{w}}}=0$ for every $w|p$, then the above conditions are equivalent to
\begin{enumerate}
\item[$\text{(a)}_{p\ }$] 
 ${\ord}_{\frakm} L_{p}(\rho) = r;$
\end{enumerate}
%
\end{conj}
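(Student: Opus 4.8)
The plan is to reduce Conjecture~\ref{bbk}, for those $\rho$ that are conjugate-symplectic and automorphic as in Theorem~\ref{main}, to a package of three ingredients organised around the Theta cycle: a construction, a height formula, and an Euler system. I would not attack the conjecture in full generality; instead each implication in rank~$1$ gets its own tool, and the remaining ranks are left aside. (The classical case $F=\Q$, $n=2$ --- Heegner points on modular curves --- is the theorem of Gross--Zagier and Kolyvagin, handled by other methods, and is excluded from the hypotheses.) \emph{First}, following \S~\ref{sec 4} I attach to $\rho$ the incoherent Hermitian space over $F$ whose local invariants are pinned down by the local root numbers of $\rho$, the associated system of unitary Shimura varieties, and Kudla's special cycles on them; using Hypothesis~\ref{hyp mod} (modularity of the generating series of special cycles) I project onto the $\rho$-isotypic part, and --- via the multiplicity-one statement furnished by Hypothesis~\ref{hyp coh} --- extract the canonical pair $(\Lm_{\rho},\Theta_{\rho})$ with $\Theta_{\rho}$ landing in the image of the $\rho$-component of the \'etale Abel--Jacobi map, i.e.\ inside $H^{1}_{f}(E,\rho)^{\r{mot}}$. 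This already gives the trivial implication $\Theta_{\rho}\neq 0 \Rightarrow \dim_{\Qpb} H^{1}_{f}(E,\rho)^{\r{mot}}\geq 1$, so the task in rank~$1$ is to control both the lower and the upper bound.

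\emph{Second}, to convert an order-of-vanishing hypothesis into nonvanishing of the class I input the arithmetic inner product formula of Li--Liu \cite{LL, LL2}: it computes the Bloch--Be\u\i linson height pairing $\lan \Theta_{\rho}(\lm),\Theta_{\rho^{*}(1)}(\lm')\ran$ as an explicit nonzero constant times $L'_{\iota}(\rho,0)$ times the canonical trivialisation $\zeta$ of $\Lm_{\rho}\ot\Lm_{\rho^{*}(1)}$, together with its $p$-adic counterpart of Liu and the author \cite{DL} expressing the \nek\ height in terms of the leading term of $L_{p}(\rho)$ at $\one$. When $\ord L=1$ --- complex or $p$-adic --- the right-hand side is nonzero, hence so is the height, and a fortiori $\Theta_{\rho}\neq 0$: this is Part~2 of Theorem~\ref{main}, and it proves $(\r{a})_{\infty},(\r{a})_{p}\Rightarrow \dim_{\Qpb}H^{1}_{f}(E,\rho)\geq 1$ for $r=1$. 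On the complex side one must invoke Conjecture~\ref{ass infty} to identify the Bloch--Be\u\i linson height of the underlying Chow cycle with the Selmer-theoretic pairing; on the $p$-adic side one needs the Panchishkin-ordinarity and split-above-$p$ hypotheses that make $L_{p}(\rho)$ and the $p$-adic height pairing available.

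\emph{Third}, to upgrade ``$\geq 1$'' to ``$=1$'' I promote $\Theta_{\rho}$ to the bottom class of an anticyclotomic Euler system over the tower of abelian CM extensions --- the subject of \cite{D-euler}, sketched in \S~\ref{sec: euler} --- assembled from norm-compatible families of special cycles together with first-order congruence relations (in the spirit of explicit reciprocity laws and Jochnowitz-style congruences) that manufacture the classes ramified at auxiliary inert primes, and then run Kolyvagin's descent. The ``sufficiently large image'' hypothesis of Part~3 is exactly what makes the Chebotarev and Galois-cohomology steps of that descent go through, yielding $\Theta_{\rho}\neq 0\Rightarrow\dim_{\Qpb}H^{1}_{f}(E,\rho)=1$; combined with the second step this gives the equivalence $(\r{a})\Leftrightarrow(\r{b})$ of Conjecture~\ref{bbk} in rank $r=1$.

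\emph{The main obstacles} are everything the Theta cycle does not see directly. (i)~The converse in rank one --- deducing $\ord L=1$ from $\dim H^{1}_{f}=1$ --- is not reachable by this route alone: through the height formula it becomes the statement $\Theta_{\rho}\neq 0$, which would require both a converse to the Euler-system divisibility (a lower bound on the Selmer group, hence an Iwasawa main conjecture of Skinner--Urban type) and a nonvanishing/parity input ruling out $\ord L\geq 2$; both are known only in restricted situations. (ii)~The rank-zero case $r=0$ runs in parallel but is essentially independent of the cycle construction: one needs an Euler-system or Eisenstein-congruence argument forcing $H^{1}_{f}=0$ when $L(\rho,0)\neq 0$. (iii)~The case $r\geq 2$ is basically untouched --- there is no higher-rank analogue of the Gross--Zagier formula --- and at best one could hope for a parity statement. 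Finally, the whole strategy is conditional on Hypotheses~\ref{hyp coh} and~\ref{hyp mod}, which are themselves deep open problems; so I would present Theorem~\ref{main} as the current state of the art rather than claim Conjecture~\ref{bbk} itself.
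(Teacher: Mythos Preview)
The statement is a \emph{conjecture}, and the paper offers no proof of it; what the paper does prove is the partial result Theorem~\ref{main} in rank~$1$ under the listed hypotheses, and your proposal is a faithful summary of exactly that strategy --- construction of $\Theta_{\rho}$, the Li--Liu and Disegni--Liu height formulas for the $(\r a)\Rightarrow\Theta_{\rho}\neq 0$ step, and the Euler system of \cite{D-euler} for the $\Theta_{\rho}\neq 0\Rightarrow\dim H^{1}_{f}=1$ step --- together with an honest account of what remains open. Your concluding sentence is the correct conclusion, and it is the paper's own.
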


\begin{rema} The first  equality in (b) generalises the conjectural finiteness of the $p^{\infty}$-torsion in the Tate--Shafarevich group of an elliptic curve. The extra condition in $\text{(a)}_{p\ }$ serves to avoid  the phenomenon of exceptional zeros, cf. \cite{Ben}.
\end{rema}

In the following pages, under some restrictions on $\rho$ we will define elements in $H^{1}_{f}(E, \rho)^{\rm mot}$ whose nonvanishing is conjecturally equivalent to the conditions of Conjecture \ref{bbk} with $r=1$. The construction will be automorphic; in the next section, we give the representation-theoretic background.

\section{Descent and  theta correspondence} 
\lb{sec 3}
Suppose for the rest of this paper that $E$ is a CM field with totally real subfield $F$.   We denote by ${\rm c}\in \Gal(E/F)$  the complex conjugation, and by $\eta\colon F^{\ts}\bks\A^{\ts}\to\{\pm 1\}$   the quadratic character attached to $E/F$. 
\subsection{$p$-adic automorphic representations} 
 We denote by $\A$ the ad\`eles of $F$; if $S$ is a finite set of places of $F$, we denote by $\A^{S}$  the ad\`eles of $F$ away from $S$. If $\G$ is a group over $F$ and $v$ is a place of $F$, we write $G_{v}\coloneqq \G(F_{v})$; if $S$ a finite set of places of $F$, we write $G_{S}\coloneqq \prod_{v\in S}G(F_{S})$. (For notational purposes, we will identify a place of $\Q$ with the set of places of $F$ above it.) We denote by  $\psi\colon F\bks \A\to \mathbf{C}^{\ts}$ the standard additive character with $\psi_{\infty}(x)=e^{2\pi i \Tr_{F_{\infty}/\R}x}$,
  and we set $\psi_{E}\coloneqq \psi\circ \Tr_{E/F}$.  We view $\psi_{|\A^{\infty}}$ as valued in $\Q^{\cyc}$ via the embedding $\iota^{\cyc}$.

\subsubsection{Unitary groups} 
Fix a positive integer $n$.  For a  place $v$ of $F$, we denote by  $\sV_{v}$ be the set of isomorphism classes of  (nondegenerate) $E_{v}/F_{v}$-hermitian spaces of dimension $n$; this consists of one element if $v$ splits in $E$, of two elements if $v$ is finite nonsplit, and of $n+1$ elements if $v$ is real. We denote by $\sV^{+}$ the set of isomorphism classes of $E/F$-hermitian spaces of dimension $n$ that are positive definite at all archimedean place, and by $\sV^{-}$ the set of   isomorphism classes of $E/F$-hermitian spaces of dimension $n$ that are positive definite at all archimedean place but one,  at which the signature is $(n-1, 1)$. We denote by $\sV^{\circ}$ the set of 
isomorphism classes of $\A_{E}/\A$-hermitian spaces of dimension $n$ such that  for all but finitely many places $v$, the Hasse--Witt invariant $\epsilon(V_{v})\coloneqq \eta_{v}( (-1)^{n\choose 2}\det V_{v})=+1$, and that $V_{v}$ is positive definite at all archimedean places. We put $\epsilon (V)\coloneqq \prod_{v}\epsilon(V_{v})$, and write $\sV^{\circ, \epsilon}\subset \sV^{\circ}$ for the set of spaces with $\epsilon (V)=\epsilon\in \{\pm\}$.

We have a natural identification $\sV^{\circ,+}=\sV^{+}$. We will mostly be interested in $ \sV^{\circ, -}$, which  we refer to as the set of   \emph{incoherent} $E/F$-hermitian spaces, cf. \cite{gross-incoh}.  If $V\in \sV^{\circ, -}$, then for every archimedean place $v$ of $F$, there exists a unique $V(v)\in \sV^{-}$ over $F$ such that $V(v)_{w}\cong V_{w}$ if $w\neq v$. 

For $V\in \sV$, let $\rH_{V}=\rU(V)$; if $V\in \sV^{\circ}$  with $\epsilon (V)=-1$,  we still use the notation $\H_{V}(\A^{S})\coloneqq \prod_{v\notin S}H_{V_{v}}$, $H_{V_{v}}\coloneqq \rU(V_{v})(F_{v})$, and we refer to (the symbol)
$$\H_{V}$$ as an \emph{incoherent} unitary group.

Suppose from now on that $n=2r$ is even. We define the quasisplit unitary group over $F$ 
$$\G=\rU(W),$$ 
where $W=E^{n}$ equipped with  the skew-hermitian form $\smalltwomat{}{ 1_{r}}{-1_{r}}{}$ (here $1_{r}$ is the identity matrix of size $r$).

\begin{defi}\lb{defR1}
 \begin{enumerate}
\item
A \emph{relevant} complex automorphic representation $\Pi$ of $\GL_{n}(\A_{E})$ is an irreducible cuspidal automorphic representation  satisfying: 
\begin{enumerate}
\item[(i)]  $\Pi\circ {\rm c}\cong \Pi^{\vee}$;
\item[(ii)] for every archimedean place $w$ of $E$, the representation $\Pi_{w}$ is induced from the character ${\rm arg}^{n -1}\ot {\rm arg}^{n -3}\ot\ldots\ot{\rm arg}^{1-n}$ of the torus $(\mathbf{C}^{\ts})^{n}=(E_{w}^{\ts})^{n}\subset \GL_{n}(E_{w})$; here ${\rm arg}(z)\coloneqq  z/|z|$.
\end{enumerate}
 \item A \emph{possibly relevant} complex automorphic representation $\pi$ of $\G(\A)$ is an irreducible cuspidal automorphic representation  such that for every archimedean place $v$ of $F$, the representation 
 $\pi_v$ is the holomorphic discrete series representation of Harish-Chandra parameter $\{\tfrac{n-1}{2},\tfrac{n-3}{2},\dots,\tfrac{3-n}{2},\tfrac{1-n}{2}\}$. We say that $\pi$ is \emph{relevant}  if it is possibly relevant and \emph{stable} as defined at the beginning of \S~\ref{sec:desc} below. 
 \item Let $V\in \sV^{\circ, -}$ and let $v$ be an archimedean place of $F$. A \emph{possibly relevant} complex cuspidal automorphic representation $\sg$ of $\H_{V(v)}(\A)$ is an irreducible cuspidal automorphic representation such that  $\sg_{v}$  is one of the $n$  discrete series representation of $H_{V(v)_{v}}=U(n-1,1)$ of Harish-Chandra parameter $\{\tfrac{n-1}{2},\tfrac{n-3}{2},\dots,\tfrac{3-n}{2},\tfrac{1-n}{2}\}$, and for every other archimedean place $v'\neq v$ of $F$, we have $\sg_{v'}=\one$ (as a representation of $H_{V(v)_{v'}}=U(n)$).  We say that $\sg$ is \emph{relevant} if it is possibly relevant and stable.
\end{enumerate}
\end{defi}

\begin{defi}\lb{defR2}
 \begin{enumerate}
\item
A \emph{relevant} $p$-adic automorphic representation $\Pi$ of $\GL_{n}(\A_{E})$ is a representation of $\GL_{n}(\A_{E}^{\infty})$ on a $\Qpb$-vector space,  such that for every $\iota\colon \Qpb\into\mathbf{C}$, the representation $\iota\Pi$ is the finite component of a (unique up to isomorphism) relevant complex automorphic representation $\Pi^{\iota}$.
 \item A \emph{possibly relevant}, respectively \emph{relevant} $p$-adic  automorphic representation $\pi$ of $\G(\A)$ is representation of $\G(\A^{\infty})$  on a $\Qpb$-vector space,  
 such that  for every $\iota\colon \Qpb\into\mathbf{C}$, the representation $\iota\pi$ is the finite component of a (unique up to isomorphism)  possibly  relevant, respectively relevant,  complex automorphic representation $\pi^{\iota}$ of $\G(\A)$.
 \item  Let $V\in \sV^{\circ, -}$.
  A \emph{possibly relevant}, respectively \emph{relevant}, $p$-adic  automorphic representation $\sg$ of $\H_{V}(\A)$ is representation of $\H_{V}(\A^{\infty})$  on a $\Qpb$-vector space,  
 such that  for every $\iota\colon \Qpb\into\mathbf{C}$ and every archimedean place $v$ of $F$, the representation $\iota\sg$ is the finite component of a (unique up to isomorphism) possibly relevant, respectively relevant, complex automorphic representation $\sg^{\iota, (v)}$ of $\H_{V(v)}(\A)$.
%
\end{enumerate}
\end{defi}

\subsection{Automorphic descent}   \lb{sec:desc}
For a place $v$ of $F$, we denote by ${\rm BC}_{v}$ the base-change map from $L$-packets  of tempered representations of $G_{v}$ to tempered representations of $\GL_{n}(E_{v})$, which is injective by \cite[Lemma 2.2.1]{Mok}. We denote by ${\rm BC}_{\G}$ and ${\rm BC}_{\H_{V}}$ the base-change maps from automorphic representations of the unitary groups $\G(\A)$ or $\H_{V}(\A)$  to automorphic representations of $\GL_{n}(\A_{E})$, respectively; we simply write ${\rm BC} $ when there is no risk of confusion. We say that a cuspidal automorphic representation of a unitary group is \emph{stable} if its base-change is still cuspidal.

\begin{rema} \lb{BC p}
 We have the following properties of the base-change maps.
\begin{enumerate}
\item[(a)]
 By  \cite[Proposition C.3.1]{LTXZZ}, if $\Pi$ is a relevant representation of $\GL_{n}(\A_{E})$, then: the preimage of $\Pi$ under ${\rm BC}_{\H_{V}}$ 
 consists of relevant   representations of $\H_{V}(\A)$; the preimage of $\Pi$  under ${\rm BC}_{\G}$   contains  a relevant representation of $\G(\A)$.
 \item[(b)]
 If $v$ is a finite place, the base-change maps may be defined for representations with coefficients over $\Qpb$, compatibly with any extensions of scalars $\iota\colon \Qpb\into\mathbf{C}$. 
 \item[(c)] As a consequence of (a) and (b), ${\rm BC}$ extends to a map from  relevant \emph{$p$-adic} automorphic representations of $\G(\A)$ and $\H_{V}(\A)$ to  relevant {$p$-adic} automorphic representations of $\GL_{n}(\A_{E})$.
\end{enumerate}
\end{rema}

 \subsubsection{Descent to a quasisplit unitary group}
We fix the auxiliary choice of a Borel subgroup $\rB\subset\rG$ with torus $\rT$ and  unipotent radical $\rN$, and (the $\rT$-orbit of) a generic  linear homomorphism $\Psi\colon \rN(F)\bks \rN(\A)\to \mathbf{C}^{\ts}$; we call this choice $(\rN, \Psi)$ a \emph{Whittaker datum}. A relevant complex or $p$-adic  automorphic representation $\pi$ of $\G(\A)$ is called \emph{$ \Psi$-generic} if it for every finite place,   $\pi_{v}$  is $\Psi_{v}$-generic in the sense that it has a non-vanishing $(N_{v},  \Psi_{|N_{v}})$-Whittaker functional .

\begin{prop} \lb{descent}
Let $\Pi$ be a  relevant $p$-adic automorphic representation of $\GL_{n}(\A_{E})$. Then there exists a relevant $p$-adic automorphic representation $\pi$ of $\G(\A)$, unique up to isomorphism, which is $\Psi$-generic and satisfies 
${\rm BC}(\pi)=\Pi$. 
\end{prop}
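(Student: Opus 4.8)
The plan is to deduce existence and uniqueness from the classification of the discrete automorphic spectrum of quasisplit unitary groups, i.e.\ from the endoscopic classification (Mok, and Kaletha--Minguez--Shin--White), together with the archimedean constraints built into the notion of relevance. First I would recall that a relevant $p$-adic representation $\Pi$ corresponds, under each $\iota\colon\Qpb\into\C$, to a conjugate self-dual cuspidal automorphic representation $\Pi^{\iota}$ of $\GL_{n}(\A_{E})$ whose archimedean components are the prescribed cohomological inductions; in particular $\Pi^{\iota}$ is conjugate self-dual of the correct parity (conjugate-symplectic, since $n=2r$ and the $L$-parameter must factor through the relevant $L$-group for it to descend to $\rU(W)$). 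By the classification, $\Pi^{\iota}$ is then the base change of a near-equivalence class of discrete automorphic representations of $\rG(\A)$, and this class contains a unique globally $\Psi$-generic member $\pi^{\iota}$: this is the content of the generic packet being a singleton at each place (Shahidi's tempered $L$-packet conjecture, known in this setting) together with the fact that the global multiplicity of the $\Psi$-generic member in a generic global packet is exactly one. The archimedean components of $\pi^{\iota}$ are forced to be the holomorphic discrete series of the stated Harish-Chandra parameter because their base change must match the prescribed $\Pi^{\iota}_{w}$ and ${\rm BC}_{v}$ is injective on $L$-packets by \cite[Lemma 2.2.1]{Mok}; hence $\pi^{\iota}$ is relevant (it is stable because $\Pi$, being cuspidal, has cuspidal — a fortiori — base change equal to itself).

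Next I would pass from the family $(\pi^{\iota})_{\iota}$ back to a single $p$-adic object. By Remark \ref{BC p}(b)--(c), the base-change map at finite places is defined over $\Qpb$ and is compatible with all $\iota$; the finite components $\pi^{\iota,\infty}$ therefore all arise from one $\GL_{n}(\A_{E}^{\infty})$-representation, namely the descent of $\Pi$, and one checks that this finite part, together with the archimedean recipe, assembles into a relevant $p$-adic automorphic representation $\pi$ of $\rG(\A)$ in the sense of Definition \ref{defR2}(2) with ${\rm BC}(\pi)=\Pi$. The key rationality input is that the generic member of the packet is singled out by a property (the $\Psi$-genericity of every finite local component, a condition insensitive to $\iota$), so the choice of $\pi^{\iota}$ is ``the same'' for all $\iota$; this is what allows the $p$-adic interpolation of Definition \ref{defR2} to go through.

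For uniqueness: if $\pi$ and $\pi'$ are two relevant $p$-adic representations of $\rG(\A)$ that are $\Psi$-generic with ${\rm BC}(\pi)={\rm BC}(\pi')=\Pi$, then for each $\iota$ the complexifications $\pi^{\iota},\pi'^{\iota}$ lie in the same global packet (same base change), are both globally $\Psi$-generic, and have the same archimedean components (again by injectivity of ${\rm BC}_{v}$ on local packets); multiplicity one for the $\Psi$-generic member forces $\pi^{\iota}\cong\pi'^{\iota}$, hence $\pi^{\iota,\infty}\cong\pi'^{\iota,\infty}$ for all $\iota$, whence $\pi\cong\pi'$.

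The main obstacle is the input from the endoscopic classification: one needs that a relevant $\Pi$ lies in the image of ${\rm BC}_{\rG}$ (which, as noted in Remark \ref{BC p}(a), follows from the explicit parameter description in \cite[\S C.3.1]{LTXZZ}, the point being that the conjugate-symplectic parity of a relevant $\Pi$ of even dimension is exactly what is needed), and that within the corresponding global packet the globally $\Psi$-generic constituent exists and occurs with multiplicity one. The latter is where the generic packet conjecture and Arthur's multiplicity formula for quasisplit unitary groups enter; everything else — the archimedean matching, the rationality bookkeeping across the $\iota$'s, and the uniqueness argument — is formal given those results.
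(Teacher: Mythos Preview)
Your overall strategy—using Mok's endoscopic classification in place of the paper's appeal to the Ginzburg--Rallis--Soudry descent (as made precise for even unitary groups by Morimoto)—is a legitimate alternative route, and your invocation of the uniqueness of the $\Psi_{v}$-generic member in each local tempered $L$-packet matches what the paper cites from Varma and Atobe. So the finite-place part of your argument is essentially the paper's, reached from a different global starting point.

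There is, however, a genuine gap at the archimedean places. You write that the archimedean components of $\pi^{\iota}$ are ``forced to be the holomorphic discrete series\dots because their base change must match the prescribed $\Pi^{\iota}_{w}$ and ${\rm BC}_{v}$ is injective on $L$-packets.'' But injectivity of ${\rm BC}_{v}$ only pins down the archimedean $L$-packet, not the individual member: the packet with the given Harish--Chandra parameter for the quasisplit real group $G_{v}$ contains several discrete series, and the holomorphic one is \emph{not} the generic member. In particular, if by ``globally $\Psi$-generic'' you meant generic at all places, then the representation you have singled out is not the relevant one; and if (as in the paper's definition) $\Psi$-genericity is imposed only at finite places, then there are several $\Psi$-generic members of the global packet, one for each choice of archimedean constituents. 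Either way, the archimedean component is not forced by your argument: it is imposed by the definition of ``relevant,'' and what remains to be shown is that the specific combination (holomorphic discrete series at infinity, $\Psi_{v}$-generic at each finite $v$) actually occurs in the cuspidal spectrum. In your framework this follows from Mok's multiplicity formula for a simple generic parameter (every member of the global packet occurs with multiplicity one), which you mention at the end but do not deploy at this step. The paper sidesteps this by citing \cite{GRS} and \cite{Mor} directly for the existence of a relevant $\Psi$-generic $\pi^{\iota}$, and then uses \cite{Varma, Ato} together with ${\rm BC}_{v}$-injectivity purely for uniqueness.
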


\begin{proof}
By \cite{GRS} and \cite{Mor}, for each $\iota$ there exists a   relevant cuspidal automorphic representation  $\pi^{\iota}$ of $\G(\A)$ that is $\Psi$-generic and satisfies ${\rm BC}(\pi^{\iota})=\Pi^{\iota}$.
 By \cite{Varma, Ato}, for each finite place $v$, each local $L$-packet of $G_{v}$ contains a unique $\Psi$-generic representation, which (together with the injectivity of ${\rm BC}_{v}$) implies that $\pi^{\iota}$ is unique up to isomorphism. Then by Remark \ref{BC p} (b), the collection $(\pi^{\iota})$ arises from a well-defined relevant $p$-adic automorphic representation $\pi$ of $\G(\A)$.
\end{proof}

\begin{rema}\lb{can psi} 
Our construction of Theta cycles will be based on the choice of a relevant  representation  $\pi$ with ${\rm BC}(\pi)=\Pi$, which is not unique. For definiteness, we may pick a Whittaker datum $\Psi$ (for which, as explained in \cite[\S~0.2.2, \S~1.6.1]{KMSW}, there is a standard choice), and take $\pi$ to be the $\Psi$-generic representation given by Proposition \ref{descent}.
\end{rema}




\subsection{Theta correspondence}  Let $\pi$ be a relevant $p$-adic representation of $\G$ with ${\rm BC}(\pi)=\Pi$. We will need to further transfer $\pi$ to a representation of unitary groups $\H_{V}$ for $V\in \sV^{\circ , - }$. 

\subsubsection{Local correspondence and duality} We first review the local theory.
Let $v$ be a  finite place of $F$, and let $C$ be either $\Qpb$ or $\mathbf{C}$.
  For $V_{v}\in \sV_{v}$, let $\omega_{V_{v}}=\omega_{V_{v}, \psi_{v}}$ be the Weil representation of $H_{V_{v}}\times G_{v}$  (with respect to the 
  character $\psi_{v}$) over $C$, a model of which is recalled in \S~\ref{models} below. 
  
  Whenever $\Box$ is some smooth admissible  representation of a group $G^{?}$, we denote by $\Box^\vee$ the contragredient, and  by  $(\, ,\ )_{\Box}$ the natural pairing on $\Box\times \Box^{\vee}$.

  The first part of the following result (for nonsplit finite places) is known as \emph{theta dichotomy}.
 
\begin{prop} \lb{dicho}
Let $\pi_{v}$ be an tempered irreducible admissible representation of $G_{v}$ over $C=\Qpb$ or $C=\mathbf{C}$.
\begin{enumerate}
\item
There exists a unique $V_{v}\in \sV_{v}$ such that 
$$\sg_{v}^{\vee}\coloneqq (\pi_{v}^{\vee} \ot \omega_{V_{v}})_{G_{v}}\neq 0.$$
\item 
The representation $\sg_{v}^{\vee}$ is tempered and irreducible.  Its contragredient $\sg_{v}$ satisfies ${\rm BC}(\sg_{v})={\rm BC}(\pi_{v})$, and the space
$$\Hom_{H_{V_{v}} \ts G_{v}}(\pi^{\vee}_{v} \ot \omega_{V_{v}} \ot \sg_{v}, C)$$
is $1$-dimensional over $C$.
\item 
The representation $(\pi_{v} \ot \omega^{\vee}_{V_{v}})_{G_{v}}$ is canonically identified with  $\sg_{v}$.
\item Denote by $\vartheta$ each of the projection maps $\pi_{v}^{\vee} \ot \omega_{V_{v}}\to \sg_{v}^{\vee}$, $\pi_{v} \ot \omega^{\vee}_{V_{v}}\to \sg_{v}$.
Then the map
$$\zeta_{v}( \vphi, \phi, f;  \vphi', \phi', f') \coloneqq  ( \vartheta(\vphi, \phi), f)_{\sg_{v}^{\vee}}\cdot  ( \vartheta(\vphi', \phi'), f')_{\sg_{v}^{}}$$
defines a canonical generator
$$\zeta_{v}\in 
\Hom_{G_{v}\ts H_{V_{v}}} 
(\pi^{\vee}_{v} \ot \omega_{V_{v}} \ot\sg_{v}, C)
\ot_{C}
\Hom_{G_{v}\ts H_{V_{v}}} 
( \pi^{}_{v} \ot \omega^{\vee}_{V_{v}} \ot (\sg_{v}^{\vee}, C),$$
with the property that if $\pi_{v}$ and $\sg_{v}$ are unramified and $ \vphi, \phi, f,  \vphi', \phi', f'$ are spherical vectors, then 
 $$\zeta_{v} (\vphi, \phi, f;  \vphi', \phi',f' ) = (\vphi, \vphi')_{\pi_{v}^{\vee}} \cd(\phi, \phi')_{\omega_{v}^{\vee}}  (f, f')_{\sg_{v}}.$$
\end{enumerate}
\end{prop}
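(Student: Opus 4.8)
The four assertions are essentially a bookkeeping of the local theta correspondence together with its self-duality properties; I would organise the proof around the structural input from the literature (Harris--Kudla--Sweet, Gan--Takeda, Gan--Ichino, and the doubling/see-saw formalism), rather than reproving anything about the theta correspondence itself. First, for part (1), I would invoke the conservation relation for the theta correspondence (Sun--Zhu, or Kudla--Rallis in the split/classical case): for a tempered irreducible $\pi_v$ of $G_v$, the sum over $V_v\in\sV_v$ of the first occurrence indices equals $n$, and since $\dim V_v=n$ for every member of $\sV_v$ (we are in the ``almost equal rank'' stable range for unitary dual pairs $\rU(n)\times\rU(n)$), there is exactly one $V_v$ for which the big theta lift $(\pi_v^\vee\ot\omega_{V_v})_{G_v}$ is nonzero. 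For split $v$ there is only one $V_v$ and nonvanishing is automatic; for nonsplit finite $v$ this is precisely theta dichotomy as recalled.

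For part (2), temperedness and irreducibility of the full theta lift in the (almost) equal-rank unitary case is due to Gan--Takeda / Gan--Ichino (building on Atobe--Gan); so $\sg_v^\vee$, hence $\sg_v$, is tempered irreducible. The compatibility ${\rm BC}(\sg_v)={\rm BC}(\pi_v)$ is the statement that the local theta correspondence for unitary groups of equal size preserves the standard $L$-parameter, which is part of the explicit determination of the correspondence in terms of Langlands parameters (Gan--Ichino, Mok); one must only check that the chosen additive character $\psi_v$ used to normalise $\omega_{V_v}$ does not perturb the parameter, which follows because for $\rU(n)\times\rU(n)$ the parameter shift is trivial. The one-dimensionality of $\Hom_{H_{V_v}\times G_v}(\sg_v\ot\pi_v^\vee\ot\omega_{V_v},C)$ is then Howe duality: once $\sg_v^\vee=(\pi_v^\vee\ot\omega_{V_v})_{G_v}^\vee$ is the (irreducible) maximal quotient, $\Hom_{G_v}(\omega_{V_v},\pi_v\boxtimes\sg_v)$ is one-dimensional, and one rewrites this Hom-space as the displayed trilinear form. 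That $C$ may be $\Qpb$ as well as $\C$ follows since all of these representations and intertwining spaces are defined over a number field and the dimension of a Hom-space between admissible representations is insensitive to extension of scalars among fields of characteristic zero; alternatively one fixes an abstract isomorphism $\Qpb\cong\C$. Part (3) is the same statement applied to the contragredient dual pair, using that $\omega_{V_v}^\vee\cong\omega_{V_v,\psi_v^{-1}}$ and that the theta lift intertwines contragredients, so $(\pi_v\ot\omega_{V_v}^\vee)_{G_v}\cong(\pi_v^\vee\ot\omega_{V_v})_{G_v}^\vee=\sg_v$ canonically.

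For part (4), once (2) and (3) give canonical (up to scalar) generators $\vartheta$ of the two projection maps, the tensor $\zeta_v$ defined by the product of the two matrix coefficients $(f,\vartheta(\vphi,\phi))_{\sg_v}\cdot(f',\vartheta(\vphi',\phi'))_{\sg_v^\vee}$ is manifestly $H_{V_v}\times G_v$-invariant and, being a nonzero element of a one-dimensional-times-one-dimensional space, is a generator; crucially it is \emph{canonical} (independent of the scalar ambiguities in $\vartheta$) because rescaling $\vartheta\mapsto c\,\vartheta$ on the first factor forces $\vartheta\mapsto c^{-1}\vartheta$ on the second, since the two projection maps are mutually dual under the pairings $(\,,\,)_{\sg_v}$ and $(\,,\,)_{\omega_{V_v}}$. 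Thus $\zeta_v$ lives canonically in the stated tensor product of Hom-spaces. The final normalisation identity in the unramified case is the only genuine computation: for $\pi_v$, $\sg_v$ unramified one uses that the spherical vector of $\omega_{V_v}$ maps under $\vartheta$ to (a fixed multiple of) the spherical vector of $\sg_v$, and then the doubling/Rallis inner product formula with everything spherical evaluates the product of matrix coefficients as the product of the three spherical pairings $(f,f')_{\sg_v}(\vphi,\vphi')_{\pi_v^\vee}(\phi,\phi')_{\omega_v^\vee}$; the constant is absorbed into the normalisation of $\vartheta$ precisely so that this comes out to $1$, which is consistent because the $\gamma$-factor governing the unramified theta lift for $\rU(n)\times\rU(n)$ is a unit.

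\textbf{Main obstacle.} The conceptual content is all available in the literature; the delicate point is the \emph{canonicity} bookkeeping in part (4) --- making precise that the scalar ambiguities in the two projections $\vartheta$ cancel in $\zeta_v$, and that the unramified normalisation is simultaneously consistent with this cancellation and with the spherical identity. This requires pinning down the duality between the two theta projections (i.e. that $(\vartheta(\vphi,\phi),\vartheta^\vee(\vphi',\phi'))_{\sg_v}$ equals, up to the fixed doubling constant, the pairing $(\vphi,\vphi')_{\pi_v^\vee}(\phi,\phi')_{\omega_{V_v}^\vee}$ restricted appropriately), which is where the doubling see-saw and the explicit Weil representation model of \S~\ref{models} must be used carefully.
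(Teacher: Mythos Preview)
Your treatment of parts (1) and (2) is essentially the paper's, though the paper cites Harris, Gong--Greni\'e, and Gan--Ichino rather than the conservation relation directly; either route is fine.

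For parts (3) and (4), however, your argument has a genuine gap that you yourself flag in the ``Main obstacle''. You claim that $\zeta_v$ is canonical because rescaling one $\vartheta$ forces the inverse rescaling of the other ``since the two projection maps are mutually dual''. But this mutual duality is precisely what is at stake: a priori the two surjections $\pi_v^\vee\otimes\omega_{V_v}\to\sg_v^\vee$ and $\pi_v\otimes\omega_{V_v}^\vee\to\sg_v$ are each unique only up to an \emph{independent} scalar, so the product $\zeta_v$ is unique only up to an arbitrary scalar, not a canonical one. Nothing in Howe duality alone links the two scalars. Your proposed fix via the doubling see-saw is the right instinct but you do not carry it out, and the unramified normalization by itself does not pin down the scalar at ramified places.

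The paper's route is different and cleaner: it \emph{defines} the canonical object first. Namely, it writes down the explicit (regularized) integral
\[
\breve\zeta(\varphi,\phi;\varphi',\phi')=\frac{b_n(\mathbf{1})}{L(1/2,\Pi)}\int_{G_v}(g\varphi,\varphi')_{\pi_v^\vee}\,(\omega(g)\phi,\phi')_{\omega_{V_v}}\,dg,
\]
which is manifestly a canonical element of $\Hom_{G_v}(\pi_v^\vee\otimes\omega_{V_v},C)\otimes\Hom_{G_v}(\pi_v\otimes\omega_{V_v}^\vee,C)$, with no choice of $\vartheta$ entering. Harris--Kudla--Sweet shows $\breve\zeta$ is nonzero, and Yamana computes it on spherical vectors. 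The paper then \emph{uses} $\breve\zeta$ to define the canonical identification of part (3): viewed as an $H_{V_v}$-invariant pairing between the two coinvariant spaces, $\breve\zeta$ identifies $(\pi_v\otimes\omega_{V_v}^\vee)_{G_v}$ with $((\pi_v^\vee\otimes\omega_{V_v})_{G_v})^\vee=\sg_v$. With this identification in hand, part (4) is immediate. So the logical order is reversed from yours: the canonical bilinear form comes first and produces the duality, rather than the duality being assumed to produce canonicity.
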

\begin{proof}
We drop all subscripts $v$. We start by recalling the first two statements. Consider first the case that $v$ is finite and $E$ is a field. 
Then  $\sg_{V}^{\vee}=(\pi^{\vee} \ot \omega_{V})_{G}$ is the (a priori, `big') theta lift of $\pi^{\vee}$ as defined in \cite[(2.1.5.1)]{harris}. 
By the local theta dichotomy proved in Theorem 2.1.7 (iv) \emph{ibid.} and \cite[Theorem 3.10]{GG11}, there is exactly one $V\in \sV$ such that $\sg_{V}^{\vee}$ is nonzero; we fix this $V$ and drop it from then notation. Then the other properties of $\sg\coloneqq (\sg^{\vee})^{\vee}$ are consequences of \cite[Theorem 4.1]{GI16} (which collects results from \cite{Wald-th, GT, GS12, GI14}).  For the case $E=F\oplus F$, see \cite{Min08}.

We now turn to the other two statements. For a character $\chi\colon F^{\ts}\to C^{\ts}$, let 
\beq \lb{bnchi} b_{n}(\chi)\coloneqq  \prod_{i=1}^{n} L(i, \chi\eta^{i-1}).\eeq
If $C=\mathbf{C}$,
then we have a  canonical element
$$\breve\zeta \in \Hom_{G}( \pi^{\vee}\ot \omega_{V}, \mathbf{C})\ot_{\mathbf{C}}
\Hom_{ G}( \pi^{} \ot \omega^{\vee}_{V}, \mathbf{C})$$
given by 
\beq \lb{zbr}
\breve\zeta ( \vphi, \phi;  \vphi', \phi')\coloneqq  
 {b_{n}(\one)\over L(1/2, \Pi)} 
\int_{G}     (g\vphi, \vphi')_{\pi^{\vee}} \cdot 
(\omega(g)\phi, \phi')_{\omega} \, dg,\eeq
where $dg$ is the measure of \cite[\S~2.1 (G7)]{DL},  $\Pi\coloneqq {\rm BC}(\pi)$. It is a generator by  \cite[\S~6]{HKS}, where the regularisation of the integral is also taken care of. (For the well-known comparison between the definition in \emph{loc. cit.}  and the one given here, see \cite[Lemma 3.1.2]{Sak}.)
 When $\pi$ (hence $\sg$) are unramified and all the vectors are spherical, by   \cite[Proposition 7.1, (7.2)]{Yam14}
we  have 
\beq \lb{unr breve} \breve\zeta (\vphi, \phi;  \vphi', \phi') =  (\vphi, \vphi')_{\pi^{\vee}} \cd(\phi, \phi')_{\omega^{\vee}}.
\eeq
If $C=\Qpb$, then for any $\iota\in \Sg$ we have a tetralinear form  $\breve\zeta^{\iota}$ as above, and by \cite[Lemma 3.30]{DL}, there is a $\breve\zeta \in \Hom_{G}( \pi^{\vee}\ot \omega_{V}, \Qpb)\ot_{\Qpb}
\Hom_{ G}( \pi^{} \ot \omega^{\vee}_{V}, \Qpb)$ such that $\zeta\ot_{\Qpb, \iota}1=\zeta^{\iota}$ for every $\iota\in \Sg$. 

Now, we may view $\breve\zeta$ as a map 
\beq \lb{z breve}
\breve{\zeta}\colon (\pi^{\vee} \ot \omega_{V})_{G}\ot ( \pi^{} \ot \omega^{\vee}_{V})_{G}\to C\eeq
that is, by inspection, invariant under the diagonal action of $H$ on both factors. It follows that $\breve{\zeta}$ gives the duality  of our third statement. The fourth statement then follows from the definitions and \eqref{unr breve}.
\end{proof}

\begin{rema} A more symmetrically  defined  exalinear form would be 
$$
 (\vphi, \phi, f;  \vphi', \phi', f')\mapsto \int_{H_{V}}\int_{G}   (g\vphi, \vphi')_{\pi^{\vee}} \cdot 
(\omega(h,g)\phi, \phi')_{\omega} \cd    (hf, f')_{\sg} \, dgdh,$$
where the integral in $dg$ is regularised as remarked after \eqref{zbr}.
If $\sg$ is  a discrete series, the integral in $dh$
 converges and its value  equals that of $\zeta_{v}$, times the formal degree of $\sg$ -- for which  \cite{BP-Planch} gives a formula in terms of adjoint gamma factors. In general, regularising the integral in $dh$ amounts to regularising  the inner product of two matrix coefficients of $\sg$.  A regularisation has been proposed by Qiu \cite{Qiu, Qiu2}; however the definition of the resulting generalised formal degree is partly conjectural, and no precise  (even conjectural) formula for it appears in the literature. 
\end{rema}

\subsubsection{Global correspondence} We have  the following global variant of Proposition \ref{dicho}.
\begin{prop}\lb{th incoh}
Let $\Pi$ be a relevant $p$-adic automorphic representation of $\GL_{n}(\A_{E})$, and set $\epsilon= \epsilon(1/2, \Pi)$.  Let $\sR_{\Pi, \G}$ be the set of isomorphism classes of relevant automorphic  representations $\pi$ of $\G(\A)$ with ${\rm BC}(\pi)=\Pi$, and let $\sR_{\Pi, \H}$ be the set of pairs $(V, \sg)$, with $V\in \sV^{\circ, \epsilon}$ and $\sg$ an isomorphism class of relevant $p$-adic automorphic representations of $\H_{V}(\A)$. 

The relation 
\beq\lb{th s}
\Hom_{\G_{V}(\A^{\infty})\times \H_{V}(\A^{\infty})}(\pi^{\infty,\vee}\ot \omega_{V}^{\infty} \ot \sg^{\infty} , \Qpb^{})\neq 0\eeq
defines a bijection between $\sR_{\Pi , \G}$ and $\sR_{\Pi, \H}$. 
\end{prop}
\begin{proof} 
Take any $\iota\colon \Qpb\into \mathbf{C}$. After base-change to $\mathbf{C}$ via $\iota$, given $\pi$, the existence of $V$ with $\eps(V)=\epsilon(1/2, \Pi)$ and of a representation $\sg^{\infty, \iota}$ of $\H_{V}(\A^{\infty})$ satisfying \eqref{th s} 
  follows from the explicit form of theta dichotomy in terms of the doubling epsilon factors of \cite{harris}, whose  product over all places coincides with the standard central epsilon factor of $\Pi$ by  \cite{LR05}. Again by \cite[Proposition C.3.1]{LTXZZ}, we have that $\sg^{\infty, \iota}$ is the finite component of relevant automorphic representation $\sg^{\iota}$; and as in Remark \ref{BC p} (c), the collection $\sg^{\iota}$ arises from a relevant $p$-adic automorphic representation $\sg$.  
  
  The bijective property of the resulting map $\sR_{\G}\to \sR_{\H}$  follows from \cite[Theorem 4.1 (iv)]{GI16} and the following archimedean fact (see \cite{NY} or \cite[Theorem 4.1 (4)]{PT}):  if  $v\vert \infty $ and   $\pi_{v}$ is the holomorphic discrete series of $U({n\over 2}, {n \over 2})$ with Harish--Chandra parameter $\{\tfrac{n-1}{2},\tfrac{n-3}{2},\dots,\tfrac{3-n}{2},\tfrac{1-n}{2}\}$, then $\pi_{v}$ has a nonzero theta lift to  $H_{V_{v}}$, with $V_{v}\in \sV_{v}$,  exactly for $V_{v}$ positive-definite, in which case  the theta lift $\sg_{v}$ is the trivial representation of $H_{V_{v}}$.
  \end{proof}


\section{Theta cycles}\lb{sec 4}

\subsection{Assumptions on the Galois representation}\lb{sec:ass}
Let again $\rho\colon G_{E}\to \GL_{n}(\Qpb)$ be irreducible, geometric, and of weight $-1$. We  denote by $\rho^{\rc}\colon G_{E}\to \GL_{n}(\Qpb)$ the representation defined by $\rho^{\rc}(g)=\rho(cgc^{-1})$, where $c\in G_{E}$ is any  fixed lift of $\rc$. (A different choice of lift would yield an isomorphic representation.)

We suppose from now on that the following conditions are satisfied:
\begin{enumerate}
\item \lb{csy}
$\rho $ is \emph{conjugate-symplectic} in the sense that there exists a perfect pairing
$$\rho\ot_{\Qpb}\rho^{\rm c}\to \Qpb(1)$$
such that for the induced map $u\colon \rho^{c}\to \rho^{*}(1)$ (where ${}^{*}$ denotes the linear dual) and its conjugate-dual $u^{*}(1)^{\rc}\colon \rho^{\rc}\to \rho^{\rc, *}(1)^{\rc}=\rho^{*}(1)$, we have $u=-u^{*}(1)^{\rc}$;
\item \lb{parity} $n=2r$ is even;
\item\lb{HT} for every place $w\vert p$ of $E$ and every embedding  $\jmath  \colon E_{w}  \into \mathbf{C}_{p}$, the $\jmath$-Hodge--Tate weights\footnote{Our convention is that  the cyclotomic character has weight~$-1$.}
of $\rho$ are the $n$ integers $\{-r, -r+1, \ldots,r-1\}$;
\item \lb{auto}
$\rho$ is \emph{automorphic} in the sense that for each $\iota\colon \Qpb \into \mathbf{C}$, there is a cuspidal automorphic representation $\Pi^{\iota}$ of $\GL_{n}(\A_{E})$ such that $L_{\iota}(\rho, s)=L(\Pi^{\iota}, s+1/2)$.
\end{enumerate}

\subsubsection{Associated automorphic representations}
A collection $(\Pi^{\iota})_{\iota\colon \Qpb\into\mathbf{C}}$ as in Condition \ref{auto} is uniquely determined up to isomorphism if it exists, by the multiplicity-one theorem for automorphic forms on $\GL_{n}$; it is conjectured to always exist. Moreover, every $\Pi^{\iota}$ is relevant in the sense of Definition \ref{defR1}.1, where Condition \ref{csy} implies property (i) in the definition, and Condition \ref{HT} implies property (ii). It is then clear that $(\Pi^{\iota})_{\iota}$ arises from a unique (up to isomorphism) relevant $p$-adic automorphic  representation 
$$\Pi=\Pi_{\rho}$$
of $\GL_{n}(\A_{E})$ (Definition \ref{defR2}.1). We denote by $\pi=\pi_{\rho}$ 
 the relevant $p$-adic representation of $\G(\A)$ associated with $\Pi$ as in Proposition \ref{descent},\footnote{As noted in Remark \ref{can psi}, any other relevant $\pi$ with ${\rm BC}(\pi)=\Pi$ would be equally good.} and by $$(V, \sg)=(V_{\rho}, \sg_{\rho})$$ the pair associated with $\pi$ as in Proposition \ref{th incoh}.  We also put $\H=\H_{V}$.

\subsection{Models of the representations} \lb{models}
We now fix some concrete models of the  representations $\omega$, $\pi$, and $\sg$. 

\subsubsection{Weil representations} 
We fix the well-known model of
 the  representation
 $\omega=\ot'_{v\nmid\infty}\omega_{V,v}$ on  $\sS(V^{r}_{\A^{\infty}}, \Qpb^{})$ associated with $\psi$, on which  $\H(\A^{\infty})$ acts by right translations, whereas the action of $\G(\A^{\infty})$ is recalled in \cite[\S 4.1 (H7)]{DL}.

Denote by   $\dag$ the involution on $\G$ given by conjugation by the element $\smalltwomat {1_{r}}{}{}{-1_{r}}$ inside $\GL_n(E)$; it acts on any $\G(R)$-module for any $E$-algebra $R$. The representation $\omega^{\dag}$ is a model of the Weil representation attached to $\psi^{-1}$.


\subsubsection{Siegel-hermitian modular forms and their $q$-expansion}
The  representation $\pi$ may be realised in spaces of hermitian modular forms, which we briefly review.

In \cite[\S~2.2]{DL}, we have defined the following objects.\footnote{In this discussion, most new notation will be introduced by equalities whose right-hand sides reproduce the corresponding notation in \cite{DL}.} 
\begin{itemize}
\item
A $\mathbf{C}$-vector  space $\sH_{\mathbf{C}}=\sA^{[r]}_{r, {\rm hol}}$ of holomorphic forms for the group $\G$.
\item For any $\Qpb$-algebra $R$, an  $R$-module $\sH_{R}=\sH^{[r]}_{r}\ot_{\Q_{p}}R $ of (classical) $p$-adic automorphic forms for $\G$, such that for each $\iota\colon \Qpb\into \mathbf{C}$, we have an isomorphism 
$$\sH_{\Qpb}\ot_{\iota}\mathbf{C}\to \sH_{\mathbf{C}}, \qquad \Phi\ot 1\mapsto \Phi^{\iota}.$$

In fact, only the case where $E/F$ is totally split above $p$ was considered in \cite{DL}, where $\sH^{[r]}_{r}$ is the direct limit, over open compact subgroups $U\subset \G(\A^{\infty})$, of subspaces of sections of a certain line bundle on a Siegel hermitian variety $\Sigma(U)_{/\Q_{p}}$; let us explain why the splitting condition is not necessary for our purposes. Define a \emph{$p$-adic CM type} of $E$ to be a set $\Phi$ of $[F:\Q]$ embeddings $i\colon E\into \Qpb$ such that $i\in \Phi$ if and only if $i\circ \rc\notin \Phi$; in the totally split case, the choice of a $p$-adic CM type is equivalent to the choice of a set $\mathtt{P}_{\rm CM}$  as in \cite[\S 2.1 (F2)]{DL}, which intervenes in the construction of $\Sigma(U)$ as a moduli scheme by fixing a \emph{signature type} for test objects in the sense of \cite[Definition 3.4.3]{LTXZZ}. However, this construction, and the comparison with complex Siegel hermitian varieties of \cite[Lemma 2.1]{DL}, go through with any $p$-adic CM type $\Phi$ (with the innocuous difference that, in general, $\Sigma(U)$ and $\sH_{r}^{[r]}$ will only be defined over a finite extension of $\Q_{p}$ in $\Qpb$).

\item  A space ${\rm SF}_{R}={\rm SF}_{r}(R)$ of formal $q$-expansions with coefficients in the (arbitrary) ring $R$, and a Siegel--Fourier expansion map ${\bf q}_{\infty}={\bf q}_{r}^{\rm an}\colon \sH_{\mathbf{C}}\to {\rm SF}_{\mathbf{C}}$.  By the argument at the end of the proof of \cite[Proposition 4.18]{DL} (based on Lemma 2.11 \emph{ibid.}), we deduce a $\Qpb^{}$-linear $q$-expansion map 
$$\qqq_{p}\colon \sH_{\Qpb}\to {\rm SF}_{\Qpb}$$ satisfying 
$\iota \qqq_{p} (\Phi) = \qqq_{\mathbf{C}}(\Phi^{\iota})$ for every $\Phi\in \sH_{\Qpb^{\cyc}}$ and  every  embedding $\iota\in \Sg$. 
\end{itemize}

By \cite[Lemma 3.14]{DL} (based on \cite{Mok}), for a relevant $p$-adic automorphic representation $\pi$, the space $\Hom_{\G(\A^{\infty})}(\pi, \sH_{\Qpb})$ is $1$-dimensional, and $\pi^{\vee, \dag}$ is also relevant.  We identify $\pi=\pi_{\rho}$ with the corresponding subspace of $\sH_{\Qpb}$. Then $\pi_{\rho^{*}(1)}$ is isomorphic to $\pi^{\vee, \dag}$. 

Moreover, for any ring $R$, let $\ul{\r{SF}}_{\, R}$ be the space of those formal expansions
$$\sum_{T\in \Herm_{r}(F)^{+}} c_{T}(a) \, q^{T}, \qquad c_{T}\in C^{\infty}( \GL_{r}(\A^{\infty}_{E}) , R)$$
satisfying $c_{{}^{\rt}a^{\rc}Ta}(y)=
c_{T}(ay)$ for all $a\in \GL_{r}(E)$; then we have a $q$-expansion map 
$$\ul{\qqq}_{p}\colon \sH_{\Qpb}\to \ul{\r{SF}}_{\, \Qpb}$$
characterised by
 $\ul{\qqq}_{p}\Phi(y)  = |\det y|_{E}^{r}\qqq(m(y)\Phi)$. Since $\rM(\A^{\infty})$ acts transitively on the set of connected components of $\Sg(U)_{\Qpb}$ for every open compact subgroup $U\subset \G(\A^{\infty})$, the map $\ul{\qqq}_{p}$ is injective.


\subsubsection{Shimura varieties and their cohomology}  We assume from now on that $\eps(\rho)=-1$. (The opposite case will be  trivial for our purposes in Definition \ref{def} below.)  Then $V\in \sV^{\circ, -}$, and we have an inverse system   $$(X_{K})_{K\subset \rH(\A^{\infty})}$$ of $(n-1)$-dimensional smooth varieties over $E$,  with the property that for every archimedean place $w$ of $F$, with underlying place $v$ of $F$, the variety $X_{V,K}\times_{E, w}\mathbf{C}$ is isomorphic to the complex Shimura variety $X_{V(v),w K}$  associated with the unitary group $\H_{V(v)}$ and the Shimura datum attached to $w$ that is the complex conjugate to the one defined in \cite[\S~C.1]{liu-fj} (and thus coincides with the one specified in \cite[\S~3.2]{LTXZZ} and used in \cite{LL, DL});  see also \cite{gross-incoh, STay}.   

From now on we assume  that each $X_{K}$ is projective, which is the case if  and only if either $F\neq \Q$, or $n=2$,  $F=\Q$ and $\eps(V_{v})=-1$ for some finite place~$v$. In fact, in the remaining non-compact case for $n=2$, the curve $X_{K}$ (closely related to a classical  modular curve) can be  canonically compactified by adding finitely many cusps; in this case the constructions make sense, and the theorems hold true, after replacing $X_{K}$ by its compactification.

Let
\beqq
H_{\textup{\'et}}^{2r-1}(X_{\ol{E}}, \Qpb(r))&\coloneqq  \varprojlim_{K\subset \H(\A^{\infty})} H_{\textup{\'et}}^{2r-1}(X_{K, \ol{E}}, \Qpb(r)),
\eeqq
where the transition maps are  pushforwards.
For each $K$, we have a spherical Hecke algebra for $\H$ acting on $X_{K}$; let $\frakm_{\rho, K}$ be the Hecke ideal denoted by $\frakm^{\mathtt{R}}_{\pi}$ in \cite[Definition 6.8]{LL}. We denote by
$$M_{\rho, K}\coloneqq   H_{\textup{\'et}}^{2r-1}(X_{K, \ol{E}}, \Qpb(r))_{\frakm_{\rho,K}}$$
the localisation, and we set 
\beqq
M_{\rho}\coloneqq \varprojlim_{K} M_{\rho, K}\subset H_{\textup{\'et}}^{2r-1}(X_{\ol{E}}, \Qpb(r)).\eeqq

 We will assume the following hypothesis, which is a special case of \cite[Hypothesis 6.6]{LL}  (it is known for $n=2$, and  it is expected to be confirmed in general in a sequel to \cite{KSZ}).
\begin{enonce}{Hypothesis}   \lb{hyp coh}
For each open compact $K\subset \H(\A^{\infty})$,  we have a Hecke- and Galois-equivariant decomposition
\beq \lb{eq coh}
M_{\rho, K}\cong \bigoplus_{\sg'} \rho \ot \sg'^{\vee, K},\eeq
where the direct sum runs over the isomorphism classes of  relevant $p$-adic automorphic representation $\sg'$ of $\H_{V}(\A)$ with ${\rm BC}(\sg')=\Pi$.
\end{enonce}
 We thus have an $\H(\A^{\infty})$-equivariant map
\beq\lb{why push}
\sg \longrightarrow \Hom_{\Qpb[G_{E}]} ( H_{\textup{\'et}}^{2r-1}(X_{\ol{E}}, \Qpb(r)), \rho),\eeq
and we identify $\sg$ with the image of this map.  We also put $M_{\sg, K}\coloneqq  \rho\ot\sg^{\vee, K}\subset H_{\textup{\'et}}^{2r-1}(X_{K, \ol{E}}, \Qpb(r))$, and 
\beq
\lb{Msg}
M_{\sg}\coloneqq \varprojlim_{K} M_{\sg, K}\subset M_{\rho}\subset H_{\textup{\'et}}^{2r-1}(X_{\ol{E}}, \Qpb(r)).\eeq
Then $\sg=\Hom_{\Qpb[G_{E}]}(M_{\sg}, \rho)\coloneqq  \varinjlim_{K} \Hom_{\Qpb[G_{E}]}(M_{\sg, K}, \rho)$.

Denote by ${\rm Fil}^{\bullet}\subset H^{2r}_{\text{\'et}}(X_{K}, \Q_{p}(r)) $ the filtration induced by the Hochschild--Serre spectral sequence $H^{i}(E, H^{2r-i}_{\text{\'et}}(X_{K}, \Q_{p}(r))) \Rightarrow  H^{2r}_{\text{\'et}}(X_{K}, \Q_{p}(r)) $.  By the argument for  \cite[Lemma 4.7]{DL}, we have a  canonical  Hecke-equivariant  projection
$$
H^{2r}_{\text{\'et}}(X^{}_{K}, \Qpb(r)) /\r{Fil}^{2}) 
\to H^{1}(E_{},  M_{\rho, K}).$$
\begin{lemm} The image of the composition 
$$[-]_{\rho}\colon \Ch^{r}(X_{K})^{0}_{\Qpb}\stackrel{\rm AJ}{\longrightarrow} H^{2r}_{\textup{\'et}}(X_{K}, \Qpb(r)) /\r{Fil}^{2}
\to H^{1}(E_{},  M_{\rho, K})$$
is contained in $ H^{1}_{f}(E_{},  M_{\rho, K})$
\end{lemm}
\begin{proof}
As in \cite[Lemma 4.24]{DL}, using \cite[Theorem B]{nek-niz} in place of \cite{nek-AJ} for $p$-adic places.
\end{proof}
  
 \subsection{Construction} \lb{const}
 We proceed in four steps. The first three  steps follow  works of Kudla and collaborators \cite{Kud-Duke, Kud03, KRY06}, and of Liu and collaborators \cite{Liu11, DL}.
 
\subsubsection{0. Special cycles in $X$} For each $x\in V^{r}_{\A^{\infty}}$ and each open compact $K\subset \H(\A^{\infty})$, we have a codimension-$r$ special cycle 
$$Z(x)_{K}\in \Ch^{r}(X_{K})$$
defined in \cite[\S~3A]{Liu11}. Putting $$T(x)\coloneqq (( x_{i}, x_{j})_{V})_{ij},$$ where $(\, ,\, )_{v}$ is the hermitian form on $V$, we recall the definition in two  basic cases.  Denote by $\Herm_{r}(F)^{+}$ the set of $r\times r$ matrices over $E$ that satisfy $T^{\rc}=T^{\rt}$ and that are totally positive semidefinite.   First,  $Z(x)_{K}=0$ if  $T(x)\notin \Herm_{r}(F)^{+}$. Second, assume that $T(x) \in  \Herm_r(F)^{+}$ is positive {definite}. Let $V_{x}\subset V$ be the incoherent hermitian space that is (place by place) the orthogonal complement of the span of $(x_{1},\ldots, x_{r})$. The corresponding embedding   $\rU(V_{x})\into \rU(V)$  of incoherent unitary groups
 induces a map of towers of Shimura varieties $\alpha_{x} \colon X_{V_{x}}\to X_{V}$; then we define $Z(x)_{K}\in \Ch^{r}(X_{V, K})$ to be the class of the image cycle.
%

\subsubsection{1. Theta kernel}  The special cycles just defined may be assembled into a generating series.
Let $\phi\in \omega$. For every $K\subset \H_{V}(\A^{\infty})$ fixing $\phi$, we define
$${}^{\qqq}\Theta(\phi)_{\rho, K}(a) \coloneqq \vol(K) \sum_{{x\in K\bks  V_{\A^{\infty}}^{r}}} \phi(xa) [Z(x)_{K}]_{\rho} \, q^{T(x)},$$ 
where $\vol(K)$  is as in \cite[Definition 3.8]{LL}. 
Then ${}^{\qqq}\Theta(\phi)_{\rho, K}$ is  an element of $  H^{1}_{f}(E, M_{\rho, K})\ot_{\Qpb}\ul{\rm SF}_{\Qpb}$, and the construction is compatible under pushforward in the tower $X_{ K}$.
(The reason why we prefer our $\Theta(\phi)_{-, \rho} $ to be compatible with pushforwards rather than pullbacks is that this allows to pair them, in Step 3, with elements of the automorphic representation $\sg$ under the identification \eqref{why push}.)

The following conjecture, which is a variant of \cite[Hypothesis 4.16]{DL}, asserts the modularity of the generating series, and from now on we will assume it holds. 
\begin{enonce}{Hypothesis}  \lb{hyp mod}
For every $\phi\in \omega$ and any  $K\subset \H_{V}(\A^{\infty})$ fixing $\phi$,   there exists a unique 
$$\Theta(\phi)_{\rho, K} \in 
  H^{1}_{f}(E, M_{\rho, K})\ot_{\Qpb} 
 \sH_{\Qpb^{}}$$
such that 
$$\ul{\qqq}_{p}(\Theta(\phi)_{K,\rho}) =  {}^{\qqq}\Theta(\phi)_{\rho, K}.$$
\end{enonce}

\begin{rema}
A recent piece of evidence for this modularity conjecture is  provided in \cite[Theorem 4.20]{DL}, which is recalled as part of Theorem \ref{ht f}.2; moreover,\footnote{I am grateful to Yifeng Liu for bringing this to my attention.}  an analogous conjecture for orthogonal Shimura varieties can be deduced from \cite{kudla-mod}.
Hypothesis \ref{hyp mod} is implied by the variant for Chow groups of \cite[Hypothesis 4.5]{LL}. See Remark 4.6  \emph{ibid.} for comments on the supporting evidence for that conjecture until then, to which we should add the recent \cite{Xia}. For the history, which traces back to the work of Gross--Kohnen--Zagier on  generating series of Heegner points \cite{GKZ}, see  \cite[Remark 3.5.5]{Chao}, cf. also \emph{ibid.} \S~6.4. 
\end{rema}

\subsubsection{2. Arithmetic theta lifts}
Denote by $\Phi\mapsto \Phi_{\pi}$ the Hecke-eigenprojection $\sH_{\Qpb}\to \pi$, and by  $\lan\, , \, \ran_{\pi^{\vee}}\colon \pi^{\vee}\ot\pi\to \Qpb$ the canonical duality. (We also use the same names for any base-change.)

Then for every $\vphi\in \pi^{\vee}$, we may define  
\beq \lb{ATL}
\Theta(\vphi, \phi)_{K}\coloneqq  \lan \vphi, \Theta(\phi)_{K, \rho, \pi}\ran_{\pi^{\vee}}\quad \in   H^{1}_{f}(E, M_{\rho, K}).\eeq
Since the map $(\vphi, \phi)\mapsto \Theta(\vphi, \phi)_{K}$ is equivariant under the action of $\Qpb[K\bks \H_{V}(\A^{\infty}) /K]$, Proposition \ref{th incoh} implies that $\Theta(\vphi, \phi)_{K}$ belongs to the subspace $H^{1}_{f}(E, M_{\sg, K})\subset H^{1}_{f}(E, M_{\rho, K})$.

\subsubsection{3. Theta cycles}
For every $f\in \sg$,$\vphi\in \pi^{\vee}$, and any 
 $K\subset \H_{V}(\A^{\infty})$ fixing $f $ and $\phi$, 
we define 
$$\Theta_{\rho} (\vphi, \phi, f)\coloneqq  f_{*}\Theta(\vphi, \phi)_{K}
 \in H^{1}_{f}(E, {\rho})^{}. $$

The following definition then satisfies the first property asserted in Theorem \ref{main}. 

\begin{defi}\lb{def} Let $\rho$ be a  Galois representation satisfying the assumptions of \S~\ref{sec:ass}. 

If $\eps(\rho)=+1$, we may put $\Lm_{\rho}=\Qpb$ and $\Theta_{\rho}\coloneqq 0$. 

If $\eps(\rho)=-1$, assume that $F\neq \Q$ and that Hypotheses \ref{hyp coh} and \ref{hyp mod} hold, and let $\pi$, $V$, $\sg$ be as above. Then we define 
\beqq
\Lm_{\rho}\coloneqq  (\pi^{\vee}\ot\omega \ot \sg)_{\G(\A^{\infty})\ts \H(\A^{\infty})},
\eeqq
and \beqq  \Theta_{\rho}\colon \Lm_{\rho}&\to  H^{1}_{f}(E, {\rho}),\\
[(\vphi, \phi, f)] &\mapsto \Theta_{\rho}(\vphi, \phi, f). \eeqq
\end{defi}


\begin{rema} Suppose that $n=2$ and that $\rho=V_{p}A_{E}$ for a modular abelian variety $A$ of $\GL_{2}$-type over $F$. Then the image of $\Theta_{\rho}$ consists of classes of Heegner points. This follows by comparing the height formulas for the two objects in  \cite{YZZ} and \cite{LiuII}, against the backdrop of \cite{nek-CM}. A direct comparison is also possible:  for $n=2$,  all the $Z(x)$ are CM points on unitary Shimura curves, which can be related along the lines of \cite[\S~4]{carayol} 
to the modular curves and the quaternionic Shimura curves used to construct Heegner points in  \cite{GZ, YZZ}. 
\end{rema}

 \section{Relation to $L$-functions and Selmer groups}\lb{sec 5}
We continue to denote by $\rho $ a  Galois representation satisfying the assumptions of \S~\ref{sec:ass}. 

\subsection{Complex and $p$-adic $L$-functions}
For every $\iota\colon \Qpb\into \mathbf{C}$, and every finite-order character $\chi'\colon G_{E}\to \Qpb^{\ts}$, we have the $L$-function 
$$L_{\iota}(\rho\ot\chi', s)= L(s+1/2, \Pi^{\iota}\ot\iota\chi'), $$
which is holomorphic and has a functional equation with center at $s=0$ and sign $\eps(\rho)$.

At least under the following assumption, we also have a $p$-adic $L$-function.  
\begin{enonce}{Assumption} \lb{ass p}
The extension $E/F$ is totally split above $p$, and  for every place $w\vert p$ of $E$, the representation $\rho_{w}$ is crystalline and Panchishkin-ordinary.
\end{enonce}
We  need to make the auxiliary choice of an isomorphism $\al \colon \pi^{\vee, \dag}\to \pi_{\rho^{*}(1)}$ (where $\pi=\pi_{\rho}, \pi_{\rho^{*}(1)}\subset\sH_{\Qpb}$), which yields for each $\iota\colon \Qpb\into \mathbf{C}$, an element
 $\rP_{\rho, \iota}=\rP_{\rho, \alpha, \iota}(\rho)\in \mathbf{C}^{\ts}$ such that
\beqq
\iota ( \vphi_{1}^{\dag} , \vphi_{2})_{\pi^{\vee}} ={ (  (\al \vphi_{1})^{\iota, \dag},  \vphi^{\iota}_{2} )_{\rm Pet}
 \over \rP_{\rho, \iota}}\eeqq
for every $\vphi_{1}\in \pi^{\vee, \dag}$, $\vphi_{2}\in \pi$; here
$$(\vphi, \vphi')_{\rm Pet}\coloneqq  {\int_{\G(F)\bks \G(\A)}   \vphi^{} (g) \vphi'(g)\, dg}$$
where
 $dg$ is  the measure of \cite[\S~2.1 (G7)]{DL}. 

For a character $\chi$ of $G_{F}$, we put  $\chi_{E}\coloneqq \chi_{|G_{E}}$, and  $b_{n}(\chi)\coloneqq \prod_{v\nmid \infty}b_{n}(\chi_{v})$, where the factors are as in \eqref{bnchi};  we also  define a constant
$$
c_{\infty}=\left(
(-1)^{r}2^{-r^{2}-r} \pi^{r^{2}}{\Gamma(1)\cdots\Gamma(r)\over \Gamma(r+1)\cdots \Gamma(2r)}
\right)^{[F:\Q]}.$$
Finally, we denote by $\sK(\sX_{F})$ the fraction field of $\sO(\sX_{F})$.

\begin{prop} Suppose that $\rho $ satisfies Assumption \ref{ass p}. There is a meromorphic function $$L_{p}(\rho)=L_{p, \al}(\rho)\quad \in  \sK(\sX_{F})$$
 characterised by the following property: for every finite-order character $\chi\in \sX_{F}(\Qpb)$ and every embedding $\iota\colon \Qpb\into \mathbf{C}$, we have
$$\iota L_{p}(\rho)(\chi) = \iota e_{p}(\rho, \chi)\cdot   { c_{\infty} L_{\iota}( \rho\ot\chi_{E}, 0) \over  b_{n}(\chi) \rP_{\rho, \iota}}.$$  
Here, $\iota e_{p}(\rho, \chi)= 
\prod_{w|v\vert p} \iota e_{w, \iota}(\rho, \chi)\in \iota\Qpb$, in which  the product ranges over the $p$-adic  places of $E$ and of $F$, and
 $$\iota e_{w}(\rho, \chi) \coloneqq 
 \gamma(\iota {\rm WD}(\rho_{w}^{+}\ot\chi_{E,w}),\psi_{E,w})^{-1} {b_{n, v}(\chi)\over L_{\iota}(\rho_{w}\ot\chi_{E, w})}.$$
 where  the Deligne--Langlands $\gamma$-factor and  Fontaine's functor  $\iota{\rm WD}$ are as recalled in \cite[(1.1.4)]{Dplf}. 
\end{prop}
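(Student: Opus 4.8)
The plan is to construct $L_p(\rho)$ as the image, under the isomorphism $\alpha$ and the identifications of \S~\ref{models}, of the \emph{doubling} $p$-adic $L$-function attached to $\pi$ and a choice of local data, and then to verify the interpolation formula place by place. First I would recall the doubling integral: for $\vphi_1 \in \pi^{\vee,\dag}$, $\vphi_2 \in \pi$, and a Siegel--Eisenstein section built from $\phi \in \omega$ twisted by the variable character $\chi$, the global doubling zeta integral $Z(\vphi_1, \vphi_2, \phi; \chi)$ unfolds to $\frac{L(1/2, \Pi^\iota \otimes \iota\chi_E)}{b_n(\chi)}$ times a product of normalised local doubling integrals $Z_v^\natural$, each of which equals $1$ for almost all $v$ (this is the basic identity underlying \cite{harris, LR05} and is used in the archimedean/non-archimedean form in \cite{DL}). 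The $p$-adic $L$-function is then obtained by $p$-adically interpolating this construction in $\chi \in \sX_F$: the away-from-$p$ local integrals vary $p$-adic-analytically because the relevant vectors are locally constant, the archimedean integral contributes the constant $c_\infty$ (this is the archimedean doubling computation, already isolated in \cite{DL}), and at $p$ one replaces $Z_p^\natural$ by the modified local integral, which by the standard unramified-principal-series computation produces exactly the Panchishkin-type factor $e_p(\rho,\chi) = \prod_{w\mid p} e_w(\rho,\chi)$ with the $\gamma$-factor of $\mathrm{WD}(\rho_w^+ \otimes \chi_{E,w})$ in the denominator.

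Concretely, I would proceed in four steps. \textbf{Step 1.} Fix, once and for all, auxiliary vectors $\vphi_1^\circ, \vphi_2^\circ$ and a Schwartz function $\phi^\circ$ away from $p$, spherical at almost all places, so that all the corresponding local doubling integrals $Z_v^\natural$ are nonzero; at $p$, use $\phi^\circ_p$ adapted to the Panchishkin filtration. This pins down a candidate element of $\sK(\sX_F)$ via the formula $\chi \mapsto Z(\vphi_1^\circ, \vphi_2^\circ, \phi^\circ; \chi)/(\text{product of the fixed local factors})$, suitably normalised. \textbf{Step 2.} Show this is independent of the auxiliary choice, using that $\Hom_{\G(\A^\infty)}(\pi', \sH_{\Qpb})$ is one-dimensional (cited from \cite[Lemma 3.13]{DL}) together with the one-dimensionality of the local doubling $\Hom$-spaces: any two choices differ by a scalar that cancels. \textbf{Step 3 (the interpolation).} For a finite-order $\chi$ and an embedding $\iota$, apply $\iota$ to the doubling identity: the global factor becomes $c_\infty L_\iota(\rho\otimes\chi_E,0)/(b_n(\chi)\rP_{\rho,\iota})$ after using the defining relation for $\rP_{\rho,\iota}$ to convert the Petersson pairing to $(\,,)_{\pi^\vee}$ via $\alpha$ (so the scalar $\alpha$ enters precisely as indicated by the subscript in $L_{p,\alpha}$), while the $p$-adic local factors reorganise into $\iota e_p(\rho,\chi)$. \textbf{Step 4.} Verify meromorphy in $\sX_F$: the away-from-$p$ and archimedean contributions are analytic (even polynomial) in $\chi$, and the $\gamma$-factor and $b_{n,v}$ are ratios of Euler-type factors, hence lie in $\sK(\sX_F)$; combined with Step 2 this gives a well-defined element characterised by its values.

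The main obstacle I expect is \textbf{Step 3 at the $p$-adic places}: matching the local doubling integral against the precise shape $\gamma(\iota\,\mathrm{WD}(\rho_w^+ \otimes \chi_{E,w}), \psi_{E,w})^{-1} b_{n,v}(\chi)/L_\iota(\rho_w \otimes \chi_{E,w})$ requires choosing $\phi_p^\circ$ so that the doubling integral computes the $L$-factor of the \emph{sub}representation $\rho_w^+$ (equivalently $\Pi_w^+$) rather than the whole $\Pi_w$, and then identifying the resulting ratio with a Deligne--Langlands $\gamma$-factor via the local functional equation. This is exactly the kind of computation carried out in the ordinary doubling literature, so I would cite the relevant local computation from \cite{DL} (and \cite{Dplf} for the normalisation of $\gamma$ and $\mathrm{WD}$) rather than redo it; the remaining bookkeeping — tracking $\psi$ versus $\psi_E$, the measure normalisation of \cite[\S~2.1 (G7)]{DL}, and the factor $b_n$ split as $\prod_v b_{n,v}$ — is routine but must be done carefully to get the stated formula on the nose.
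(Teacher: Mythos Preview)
Your outline is a reasonable sketch of the doubling-method construction of $L_p(\rho)$, and it is indeed the method underlying \cite{DL}. However, the paper's proof is a two-line citation: the incomplete $p$-adic $L$-function already exists as \cite[Theorem~1.4]{DL}, and one simply multiplies by the missing local $L$-factors at ramified and $p$-adic places (as in \cite[Remark~3.38]{DL}) to obtain the complete $L_p(\rho)$ with the stated interpolation. So rather than reconstructing the doubling machinery, you should just invoke that theorem.

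There is also a genuine soft spot in your sketch that the citation avoids. In your Step~1 you define a candidate ``element of $\sK(\sX_F)$'' by specifying its values at finite-order $\chi$, and in Step~4 you argue meromorphy factor by factor. But the quantity you want to interpolate has the global $L$-value $L_\iota(\rho\otimes\chi_E,0)$ as its main constituent, an infinite Euler product; showing that \emph{this} is the specialisation of a single element of $\sK(\sX_F)$ is exactly the hard analytic input (the $p$-adic interpolation of the Siegel--Eisenstein series, or equivalently the construction of an Eisenstein measure), which your Steps~1--4 do not address. You allude to ``$p$-adically interpolating this construction in $\chi$'' and say the away-from-$p$ integrals vary analytically, but the away-from-$p$ local integrals being locally constant does not by itself produce a global element of $\sK(\sX_F)$: you still need the $p$-adic family of Eisenstein series. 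That is the core of \cite[Theorem~1.4]{DL} (building on \cite{EHLS}), so once you cite it you are done; the local bookkeeping you describe in Step~3 at $p$ is precisely what is packaged into \cite[Remark~3.38]{DL}.
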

\begin{proof} This follows by multiplying the incomplete $p$-adic $L$-function of \cite[Theorem 1.4]{DL}  by local $L$-factors at ramified and $p$-adic places, as in the proof of Proposition 3.39 \emph{ibid.}\footnote{Before  \cite{DL}, a $p$-adic $L$-function that extends $L_{p}(\rho)$ to a larger space was constructed in \cite{EHLS};  the  rationality property proved there is weaker than stated here.}  
\end{proof}

\subsection{Pairings}
Let $\rho$ be a representation satisfying the assumptions of Definition \ref{def}, and let  $\pi_{\rho}$, $V$, $\sg_{\rho}$, $\Lambda_{\rho}$, and $\Theta_{\rho}$ be the associated objects. We denote by $\pi_{v}$ and $\sg_{v}$ the local components of $\pi_{\rho}$ and $\sg_{\rho}$ at the place $v$ (which are well-defined up to isomorphism).

\subsubsection{Dual Theta cycles} 
The representation $\rho^{*}(1)$ also satisfies those assumptions, and we have
%
the corresponding  map
 $$\Theta_{\rho^{*}(1)}^{}\colon \Lm_{\rho^{*}(1)}\to H^{1}_{f}(E, \rho^{*}(1)).$$

\subsubsection{Pairings} 

Let  $\lan \, , \, \ran \colon M_{\rho}\ot M_{\rho^{*}(1)}\to \Qpb(1)$ be the pairing induced by Poincar\'e duality. Then we define a pairing 
\beq \lb{pair sg}
(\, , \, )_{\sg}\colon \sg_{\rho}\ot \sg_{\rho^{*}(1)} \to \Qpb\eeq
 by $(f, f')_{\sg}\coloneqq  f\circ u( f'^{*}(1))$, where $f'^{*}(1)\colon \rho_{\sg^{*}(1)}^{*}(1)\to M_{\rho^{*}(1)}^{*}(1)$ is the transpose, and $u\colon M_{\rho^{*}(1)}^{*}(1)\to M_{\rho}$ is the isomorphism induced by $ \lan\, , \, \ran$. 
 Thus $\sg_{\rho^{*}(1)}$ is identified with $\sg_{\rho}^{\vee}=\sg^{\vee}$.
 
 We also have a canonical pairing on $\omega\ot \omega^{\dag}$ defined by
\beq\lb{pair omega}
(\phi, \phi')_{\sg}=\int_{V^{r}_{\A^{\infty}}}\phi(x)\phi'(x)\, dx\eeq
for the product of $\psi$-selfdual measures. Thus $\omega^{\dag}$ is identified with $\omega^{\vee}$. Similarly, if we denote  $\iota\Box\coloneqq \Box\ot_{\Qpb, \iota}\mathbf{C}$, and complex conjugation in $\mathbf{C}$ by a bar, we have $\ol{\iota\omega}=\omega^{\vee}$. Let $\vol(H_{\infty})$ be the volume of $\H(F_{\infty})$ for the measure denoted ${1\over b_{2r}(0)}\, dh_{v}^{\natural}$ in \cite[Definition 3.8]{LL}, which is a rational number by \cite[Lemma 2.2.1]{DZ}.

Then:
 \begin{itemize}
 \item
 for every isomorphism $\al\colon \pi_{\rho}^{\vee, \dag}\to \pi_{\rho^{*}(1)}$, we
  have a pairing
\beq\lb{z al}
\zeta_{\al}\coloneqq  \vol(H_{\infty})\cdot \ot_{v\nmid \infty}\zeta_{v}\circ (\, )^{\dag}\circ j_{\al} \colon \Lm_{\rho}\ot\Lm_{\rho^{*}(1)}\to \Qpb,\eeq
where $j_{\al}$ identifies the factor $\pi_{\rho^{*}(1)}^{\vee}$ of $ \Lm_{\rho}\ot\Lm_{\rho^{*}(1)}$ with $\pi_{\rho}^{\dag}$ via the dual of $\al$, and $(\, )^{\dag}$ maps $\pi_{\rho}^{\dag}\ot \omega$ to $\pi_{\rho}\ot \omega^{\dag}=\pi_{\rho}\ot \omega^{\vee}$;

\item for every $\iota\in \Sg$ we have an identification $j_{\iota}\colon \iota{\pi_{\rho^{*}(1)}^{\vee} \stackrel{\cong}{\longrightarrow} \ol{\iota\pi_{\rho}}}$
via the restriction of $(\, , \, )_{\rm Pet}$ to $\ol{\pi_{\rho}^{\iota}}\ot\pi_{\rho^{*}(1)}$. Then 
 we obtain a pairing
$$\zeta_{\iota} \coloneqq  \vol(H_{\infty})\cdot   \ot_{v\nmid \infty}\zeta_{v}\circ \ol{(\, ) }\circ j_{\iota}\colon \iota\Lm_{\rho}^{}\ot\iota\Lm_{\rho^{*}(1)}\to \mathbf{C}$$
where $\ol{(\, )}$ maps $\ol{\iota\pi_{\rho}}\ot\iota\omega $ to $ \iota\pi_{\rho}\ot \ol{\iota\omega}= \iota\pi_{\rho} \ot \iota\omega^{\vee}$. 
  \end{itemize}

\subsubsection{$p$-adic height pairing} 
Assume that $\rho $ is Panchishkin-ordinary. 
Then the construction of \nek\ \cite{nek-height} (see \cite[\S~4.2]{DL} for a verification of the assumptions) yields  a $p$-adic height pairing 
$$\lan \  ,\  \ran\colon H^{1}_{f}(E, \rho) \ot H^{1}_{f}(E, \rho^{*}(1))\to \Gamma_{F}\hat{\ot}\Qpb.$$ 

\subsubsection{Complex height pairings}
On the other hand, assume that $p$ is unramified in $E$,  and let  $K_{p}^{\circ}=\prod_{v\vert p}H_{v}\subset H_{p}$ be a product ot maximal hyperspecial subgroups. Then for open compact $K^{p}\subset \H(\A^{p\infty})$, setting $K\coloneqq K^{p}K_{p}^{\circ}\subset \H(\A^{\infty})$, the variety $X_{K}$ has good reduction at all $p$-adic places.  Define
$$ \Ch^{r}(X_{K})^{\lan p\ran}\subset \Ch^{r}(X_{K})^{0} $$
to be the $\Q$-subspace of algebraic cycles whose class in $H^{2r}(X_{K, E_{w}}, \Q_{p}(r))$ is trivial for every finite place $w\nmid p $ of $E$.
Li and Liu \cite{LL} observed that the construction of  Be\u\i linson \cite{Bei87} unconditionally defines a height pairing 
\beq \lb{eq BB}
\lan\, ,\,  \ran^{\rm BB}\colon \Ch^{r}(X_{K})_{\mathbf{C}}^{\lan p\ran} \ot_{\mathbf{C}} \Ch^{r}(X_{K})_{\mathbf{C}}^{\lan p \ran}\to \mathbf{C}\ot_{\Q}\Q_{p}\eeq
that is $\mathbf{C}$-linear in the first factor and $\mathbf{C}$-antilinear in the second factor.  (It is conjectured that the pairing takes values in $\mathbf{C}\subset \mathbf{C}\ot_{\Q}\Q_{p}$; this turns out to be the case in the application to Theta cycles.)

In order to descend this pairing to Selmer groups, we need to assume a case of a standard conjecture on the injectivity of Abel--Jacobi maps. Whenever $K\subset \H(\A^{\infty})$ is an open compact subgroup that is understood from the context, denote $\frakm_{\rho}=\frakm_{\rho, K}$, $\frakm_{\rho^{*}(1)}=\frakm_{\rho^{*}(1), K}$
 
\begin{conj}
\lb{ass infty}
For $\rho^{?}\in \{\rho, \rho^{*}(1)\}$ and for each open compact  subgroup $K^{p}\subset \H(\A^{p\infty})$, 
the Abel--Jacobi map 
\beq \lb{AJc}
{\rm AJ}_{p,  K^{p}K_{p}^{\circ}}\colon
 \left(\Ch^{r}(X_{K^{p}K_{p}^{\circ}})_{\Qpb}^{\lan p \ran}\right)_{\frakm_{\rho^{?}}}\to H^{1}_{f}(E, M_{\rho^{?}, K^{p}K_{p}^{\circ}  })
 \eeq
 is injective. 
\end{conj}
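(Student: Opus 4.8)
Since Conjecture \ref{ass infty} is a special case of the injectivity half of the Be\u\i linson--Bloch conjecture, I do not expect an unconditional proof; the plan is to reduce it, modulo the expected structure of the cohomology of $X_{K^{p}K_{p}^{\circ}}$, to a cleaner statement, and to indicate which extra input would make it accessible. Write $K:=K^{p}K_{p}^{\circ}$. First I would record that, after localisation at $\frakm_{\rho^{?}}$, the \'etale cohomology of $X_{K}$ is concentrated in the middle degree: $H^{j}_{\textup{\'et}}(X_{K,\ol E},\Qpb(r))_{\frakm_{\rho^{?}}}=0$ for $j\neq 2r-1$. This refines Hypothesis \ref{hyp coh} and should follow, as in \cite{LL} and \cite[Appendix C]{LTXZZ}, from the fact that $\frakm_{\rho^{?}}$ is a \emph{stable}, non-Eisenstein maximal ideal, so that only the cuspidal middle-degree contributions survive in the Matsushima-type description of the cohomology; like Hypothesis \ref{hyp coh} itself, this is expected to be settled in a sequel to \cite{KSZ}.

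Granting this concentration, I would run the Hochschild--Serre spectral sequence $H^{i}(G_{E},H^{j}_{\textup{\'et}}(X_{K,\ol E},\Qpb(r)))\Rightarrow H^{i+j}_{\textup{\'et}}(X_{K,E},\Qpb(r))$ for Jannsen's continuous \'etale cohomology after localising at $\frakm_{\rho^{?}}$: only the term with $j=2r-1$ survives, all differentials into and out of it vanish, and there is no contribution from $H^{0}(G_{E},H^{2r})$, so the resulting filtration on $H^{2r}_{\textup{\'et}}(X_{K,E},\Qpb(r))_{\frakm_{\rho^{?}}}$ has $F^{1}$ equal to the whole group, $F^{2}=0$, and a single graded piece $H^{1}(G_{E},M_{\rho^{?},K})$. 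Since the Abel--Jacobi map sends a cohomologically trivial cycle $Z$ to the class of $\mathrm{cl}(Z)\in F^{1}$ modulo $F^{2}$, the vanishing $F^{2}=0$ forces $\ker(\mathrm{AJ}_{p,K})$ to equal the kernel of the absolute cycle class map
\[
\bigl(\Ch^{r}(X_{K})^{\lan p\ran}_{\Qpb}\bigr)_{\frakm_{\rho^{?}}}\longrightarrow H^{2r}_{\textup{\'et}}(X_{K,E},\Qpb(r))_{\frakm_{\rho^{?}}}\cong H^{1}(G_{E},M_{\rho^{?},K});
\]
the $\lan p\ran$-condition, together with the good reduction afforded by the hyperspecial $K_{p}^{\circ}$, guarantees that the image in fact lies in the Bloch--Kato subgroup $H^{1}_{f}(E,M_{\rho^{?},K})$. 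Thus Conjecture \ref{ass infty} becomes equivalent to the injectivity of this localised cycle class map into $H^{1}_{f}$.

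The hard part is precisely this last injectivity, which is an instance of the Be\u\i linson--Bloch conjecture for which no general method is known. I see two routes. One can try to bound the target: by Hypothesis \ref{hyp coh}, $H^{1}_{f}(E,M_{\rho^{?},K})\cong\bigoplus_{\sg'}H^{1}_{f}(E,\rho^{?})\ot\sg'^{\vee,K}$, so if one knows independently --- for instance from the Euler system of \cite{D-euler} under the large-image hypothesis of Part~3 of Theorem \ref{main} --- that $\dim_{\Qpb}H^{1}_{f}(E,\rho^{?})\leq 1$, the problem collapses to excluding a single linear relation among special cycles, which one can hope to settle via the nondegeneracy of the height pairings of \S~\ref{sec 5} on the span of the Theta cycles. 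Alternatively one can try to bound the source geometrically, by a supersingular-reduction and level-raising argument in the style of \cite{LTXZZ}, showing that a cohomologically trivial cycle that is moreover trivial at all finite places away from $p$ must die after localisation at $\frakm_{\rho^{?}}$. For $r=1$ the map in question is the Abel--Jacobi map on Heegner points and its injectivity is known (\cite{nek-height}); in higher rank it is exactly the kind of statement one cannot currently establish unconditionally, which is why it is isolated here as a hypothesis.
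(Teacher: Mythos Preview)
The statement you have attempted to prove is stated in the paper as a \emph{conjecture}, not as a theorem; the paper offers no proof and does not claim one. It is isolated precisely as an unproved hypothesis (a special case of the injectivity of Abel--Jacobi maps predicted by Be\u\i linson--Bloch) and is then \emph{assumed} as input to the complex height formula (Theorem~\ref{ht f}\,(1) and Theorem~\ref{main}\,(2)(a)). There is therefore nothing in the paper to compare your argument against.

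As for the content of your write-up: it is not a proof either, and you say as much. Your Hochschild--Serre reduction (concentration in middle degree at $\frakm_{\rho^{?}}$, collapse of the spectral sequence, identification of $\ker(\mathrm{AJ})$ with the kernel of the absolute cycle class map) is standard and correct, and it is essentially how one sees that Conjecture~\ref{ass infty} is a localised Be\u\i linson--Bloch statement; but the ``hard part'' you isolate at the end is exactly the conjecture itself, so no progress has been made. Your two suggested routes are speculative: the first is circular in the present context (the Euler-system bound on $H^{1}_{f}$ uses the nonvanishing of $\Theta_{\rho}$, while the complex height formula that would feed such nonvanishing is precisely the place where Conjecture~\ref{ass infty} is invoked), and the second is not known to work beyond very special low-rank situations. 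The honest summary is that this is an open conjecture, the paper treats it as such, and your note should be read as context rather than as a proof.
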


Assume that $\rho$ is crystalline at all $p$-adic places. Fix a maximal hyperspecial subgroup $K_{p}^{\circ}\subset \H(\A^{p\infty})$, and assume that Conjecture \ref{ass infty} holds. Denote by   $H^{1}_{f}(E, M_{\rho^{?}, K^{p}K_{p}^{\circ}})^{X}$ the image of \eqref{AJc}, and let 
$$H^{1}_{f}(E, \rho^{?})^{X_{K_{p}^{\circ}}}\coloneqq \sum_{\sg', K^{p}}\sum_{f'\in (\sg')^{K^{p}K_{p}}} f'_{*} H^{1}_{f}(E, M_{\rho^{?}, K^{p}K_{p}^{\circ}})^{X}$$
where the first sum  is as in  \eqref{eq coh} for $\rho^{?}$. 
Then for every $\iota\colon \Qpb\into \mathbf{C}$ and every $K=K^{p}K_{p}^{\circ}$, we have a pairing 
\beq \lb{ht on X}
\lan\ , \ \ran^{\iota}_{K}\colon H^{1}_{f}(E, M_{\rho, K})^{X} \ot_{\Qpb}
 H^{1}_{f}(E, M_{\rho^{*}(1), K})^{X} \ot_{\Qpb, \iota}\mathbf{C}\to \mathbf{C}\ot_{\Q}\Q_{p}\eeq
 transported from \eqref{eq BB} via the maps ${\rm AJ}_{p,  K^{p}K_{p}^{\circ}}\ot_{\iota}1$. We may deduce from  it a pairing 
\beq \lb{eq BB 2}
\lan\ , \ \ran^{\iota}\colon H^{1}_{f}(E, \rho)^{X_{K_{p}^{\circ}}} \ot_{\Qpb} H^{1}_{f}(E, \rho^{*}(1))^{X_{K_{p}^{\circ}}} \ot_{\Qpb, \iota}\mathbf{C}\to \mathbf{C}\ot_{\Q}\Q_{p}\eeq
defined as follows. 

For $i=1, 2$ let 
$$c_{i}=f_{i, *}{\rm AJ}_{p, K} c'_{i}$$ for some $K=K^{p}K_{p}^{\circ}$,  some $f_{i}\in \sg_{i}^{K}$, and some 
$$c_{1}'\in \left(\Ch^{r}(X_{K^{p}K_{p}^{\circ}})_{\Qpb}^{\lan p\ran}\right)_{\frakm_{\rho}}, 
\qquad c_{2}'\in \left(\Ch^{r}(X_{K^{p}K_{p}^{\circ}})_{\Qpb}^{\lan p \ran}\right)_{\frakm_{\rho^{*}(1)}}.$$ If $\sg_{1}\not\cong \sg_{2}^{\vee}$, we put 
\beq\lb{eq BB 3}
\lan c_{1}, c_{2}\ran^{\iota}\coloneqq 0.\eeq
 If $\sg_{1}\cong\sg_{2}^{\vee}$, we have the pairing $(\, , \ )_{\sg_{1}}$ of \eqref{pair sg} on $\sg_{1}\ot \sg_{2}$, through which we identify $\sg_{2}=\sg_{1}^{\vee}$. 
Let  $$\rt_{K}(f_{1}\ot f_{2})\in \Hom(\sg_{1}^{\vee, K}, \sg_{2}^{K})=\End(\sg_{1}^{\vee, K})
=\End_{\Qpb[G_{E}]}(M_{\sg_{1}, K})$$ be given by 
$$\rt_{K}(f_{1}\ot f_{2})(v_{1})  = \vol(K)\cd (v_{1}, f_{1})_{\sg_{1}} \cd f_{2},$$
and let 
\beq\lb{vol here}
\rt(f_{1}\ot f_{2})(v_{1})  = \vol(K)\cd \rt_{K}(f_{1}\ot f_{2});\eeq
 the normalising volume factor makes ${\rt} $ into a well-defined map $\sg_{1}\ot \sg_{1}^{\vee}\to \End_{\Qpb[G_{E}]}(M_{\sg_{1}})$.  The existence of a Hecke correspondence acting as $\rt(f_{1}\ot f_{2})$ implies that the action of  $\rt(f_{1}\ot f_{2})$ on Selmer groups preserves the subspace $H^{1}_{f}(E, M_{\sg_{1}, K})^{X_{K_{p}^{\circ}}}$. 
Then we define
\beq\lb{eq BB 4}
\lan c_{1}, c_{2}\ran^{\iota}\coloneqq  \lan \rt(f_{1}\ot f_{2}) c_{1}', c_{2}'\ran^{\iota}_{K}.\eeq
The definition of \eqref{eq BB 2} in the general case  follows from \eqref{eq BB 3}, \eqref{eq BB 4} by bilinearity.

\begin{rema} \lb{rm ht}
In the $p$-adic case, we also have $\Gamma_{F}\hat{\ot}\Qpb$-valued \nek\ pairings $ \lan \ ,  \ \ran_{K}$ analogous to \eqref{ht on X} (whose construction takes  as input the pairing on $M_{\rho, K}\ot M_{\rho^{*}(1), K}$ deduced from Poincar\'e duality). The analogous formula to \eqref{eq BB 4} holds true as a consequence of  the definitions and the projection formula \cite[Lemma A.2.5]{DZ}.
\end{rema}


\subsection{The height formulas} 
We may now state the main known results on Theta cycles. They parallel those of \cite{GZ, PR, Kol} on Heegner points.

We will say that $E$ and $\rho$ are \emph{mildly ramified} if  $E$ and $\pi_{\rho}$  satisfy the hypotheses of \cite[Assumption 1.6]{DL}, except possibly for the ones about $p$-adic places.

\begin{theo}\lb{ht f} Suppose that $F\neq \Q$ or $n=2$, that $E$ and  $\rho$ are mildly ramified, and that $\rho$ is crystalline at all $p$-adic places.  Assume  Hypothesis \ref{hyp coh}.
\begin{enumerate}
\item
Assume the Modularity Hypothesis \ref{hyp mod}, Conjecture \ref{ass infty}, and that $p$ is unramified in $E$.   Then for every $\lm \in \Lm_{\rho}$, $\lm' \in \Lm_{\rho^{*}(1)}$ and for every $\iota\in \Sg$, we have
$$\lan \Theta_{\rho} (\lm),
\Theta_{\rho^{*}(1)}(\lm')\ran^{\iota} 
= {c_{\infty}L_{\iota}'(\rho, 0)\over b_{n}(\one)} \cd \zeta_{\iota}(\lm, \lm')
$$
in $\mathbf{C}$.

\item 
Suppose that Assumption \ref{ass p} holds and that $p>n$. Let  $\al\colon \pi_{\rho}^{\vee, \dag}\cong\pi_{\rho^{*}(1)}$.  Then:
\begin{itemize}
\item if  the order of vanishing of $L_{p, \al}(\rho)$ at $\one $ is  one, then the Modularity Hypothesis \ref{hyp mod} holds, and 
 for every $\lm \in \Lm_{\rho}$, $\lm'\in \Lm_{\rho^{*}(1)}$,
  we have
$$\lan \Theta_{\rho} (\lm), 
\Theta_{\rho^{*}(1)}(\lm')\ran 
= e_{p}(\rho, \one)^{-1} \cdot \rd L_{p, \al}(\rho)(\one) \cd \zeta_{\al}(\lm, \lm').
$$
in $\Gamma_{F} \hat{\ot}\Qpb=T_{\one}^{*}\sX_{F}$.
\item   if  the order of vanishing of $L_{p, \al}(\rho)$ at $\one$ is not one and  the Modularity Hypothesis \ref{hyp mod} holds, then
 for every $\lm \in \Lm_{\rho}$, $\lm'\in \Lm_{\rho^{*}(1)}$,
we have
$$\lan \Theta_{\rho} (\lm), 
\Theta_{\rho^{*}(1)}(\lm')\ran =0.$$
\end{itemize}
\end{enumerate}
\end{theo}
\begin{proof}
 Write $\lm =[(\vphi, \phi, f)]$, $\lm'=[( \vphi', \phi', f')]$. 
 Consider the $p$-adic case. The  modularity result  is \cite[Theorem 4.20]{DL}, after projection $H^{1}_{f}(E,M_{\rho}) \to H^{1}_{f}(E, M_{\sg'})$ for any relevant $\sg'$ with ${\rm BC}(\sg')=\Pi$; but this is equivalent to the modularity in $H^{1}_{f}(E, \rho) $ by Hypothesis \ref{hyp mod}.

For the first height formula, by the definitions and Remark \ref{rm ht}, it is equivalent to prove
\beq
\lb{my AIP}
\lan \rt(f\ot f'^{\vee})\, \Theta(\vphi, \phi), 
\Theta(\vphi', \phi') \ran 
= e_{p}(\rho, \one)^{-1} \cdot \rd L_{p, \al}(\rho)(\one) \cd \breve\zeta_{\al}(  \rt(f\ot f'^{\vee})\vth(\vphi, \phi'); \vth(\vphi', \phi'))
\eeq
where the $\Theta$'s are the arithmetic theta liftings for $\rho$ and $\rho^{*}(1)$ as in \eqref{ATL}, and  
$$\breve\zeta_{\al} =\vol(H_{\infty}) \cdot \ot_{v\nmid \infty}\breve\zeta_{v} \circ (\ )^{\dag}\circ j_{\al}$$ is defined analogously to \eqref{z al} based on  the pairings \eqref{z breve}. Pick a $K\subset \H_{V}(\A^{\infty})$ fixing $f, f', \phi, \phi'$, and
let $T\in \sH(\H(\A^{\infty}))$ be a Hecke operator acting as $\vol(K)^{-1}\rt(f\ot f'^{\vee})$ on  $\sg^{\vee}$. Then \eqref{my AIP} is equivalent to \cite[Theorem 1.8 (1)]{DL} in level $K$ for 
$$(\vphi, T\phi; \vphi', \phi').$$
   (Note that our definitions of the arithmetic theta lifts  $\Theta(-, -)$ differ from those of \cite{DL} by  a factor $\vol(K)$; in the height formula, one factor is accounted for by \eqref{vol here}, and another one by the normalisation of height pairings in \emph{loc. cit.}. The term $\vol^{\natural}(K)$  in \cite{DL} equals our $\vol(H_{\infty})\vol(K)$: this difference is accounted for 
by the factor $\vol(H_{\infty})$ in the pairing $\zeta_{\al}$.)
   
The $p$-adic  height vanishing formula is likewise equivalent to \cite[Theorem 1.8 (2)]{DL}.

The complex case is similarly reduced to \cite[Theorem 1.8]{LL2}. As $\rho$ is crystalline at all $w\vert p$, the representations $\Pi_{w}$, $\sg_{w}$, $\pi_{w}$ are unramified, so that we can take representatives $(f, \vphi, \phi; f', \vphi', \phi')$ of $\lm, \lm'\neq 0$ that are fixed by a maximal hyperspecial $K_{p}^{\circ}$. Then the fact that $\lan \ ,\ \ran^{\iota}$ is well-defined on Theta cycles follows from the definitions and \cite[Proposition 6.10 (3)]{LL}.
\end{proof}
Part 2 of Theorem \ref{main} is then an immediate consequence of Theorem \ref{ht f}.
For a beautiful exposition  of some key aspects of the proofs of the  formulas in \cite{LL, LL2, DL}, see \cite{Chao}. 

The proof of Theorem \ref{ht f} suggests that from the point of view of height formulas, Theta cycles offer no material  advantage over previous constructions. This is not so from the point of view of Euler systems, as we explain next.

\subsection{An Euler system} \lb{sec: euler}
The main technique for bounding Selmer groups is that of Euler systems, originally introduced by Kolyvagin to study Heegner points \cite{Kol, Koly}. Roughly speaking, an Euler system for a representation $\rho$ of $G_{E}$ is a collection of integral Selmer classes defined over certain abelian extensions of $\rho$ and satisfying certain compatibility relations; the (one) class defined over $E$ itself is called the \emph{base} class of the Euler system. 
	
In a forthcoming work, Jetchev--\nek--Skinner theorise a variant of this notion, that we shall call a \emph{JNS} Euler system.  It is adapted to conjugate-symplectic representations over CM fields, where the abelian extensions are ring class fields ramified at the primes of $E$ split over the totally real subfield~$F$ (see  \cite{Skinner}).
 Their main result is that if $\rho$ has `sufficiently large' image, then the existence of a JNS Euler system with nontrivial base class $z$ implies that $z$ generates the Selmer group of $\rho$: for a precise statement (when $F=\Q$), see  \cite[Theorem 8.3 and Remark 8.4]{ACR}, where JNS Euler systems are called `split anticyclotomic Euler systems' (\emph{ibid.}, Definition 8.1).

The following is the main result of  \cite{D-euler}.  Granted the results of Jetchev--\nek--Skinner, it implies part 3 of Theorem \ref{main}.

\begin{theo} \lb{ES} Let $\rho \colon G_{E}\to \GL_{n}(\Qpb)$ be a representation satisfying the assumptions of \S~\ref{sec:ass}. Then for any $\lm \in \Lm_{\rho}$, there exists a JNS Euler system  based on  $\Theta_{\rho}(\lm)$. 
\end{theo}

Multiplicity-one principles are remarkably useful to prove relations between special cycles and, in particular, compatibility relations in Selmer groups -- as first observed in  \cite{YZZ} and \cite{LSZ}. The proof of Theorem \ref{ES} is no exception: this is the main technical advantage of having constructed a cycle depending on one parameter only.

\begin{bibdiv}
\begin{biblist}

\bib{ACR}{article}{
   author={Alonso, Ra\'ul}, 
   author={Castella, Francesc}, 
   author={Rivero, \'Oscar}, title={The diagonal cycle Euler system for $\GL_{2}\times \GL_{2}$}, journal={Journal of the Institute of Mathematics of Jussieu}, publisher={Cambridge University Press},
 year={2023}, pages={1--63}}

\bib{Ato}{article}{
   author={Atobe, Hiraku},
   title={On the uniqueness of generic representations in an $L$-packet},
   journal={Int. Math. Res. Not. IMRN},
   date={2017},
   number={23},
   pages={7051--7068},
   issn={1073-7928},
   review={\MR{3801418}},
   doi={10.1093/imrn/rnw220},
}

\bib{bei}{article}{
   author={Be\u\i linson, A. A.},
   title={Higher regulators and values of $L$-functions},
   language={Russian},
   conference={
      title={Current problems in mathematics, Vol. 24},
   },
   book={
      series={Itogi Nauki i Tekhniki},
      publisher={Akad. Nauk SSSR, Vsesoyuz. Inst. Nauchn. i Tekhn. Inform.,
   Moscow},
   },
   date={1984},
   pages={181--238},
   review={\MR{760999}}, 
}

\bib{Bei87}{article}{
   author={Be\u{\i}linson, A.},
   title={Height pairing between algebraic cycles},
   conference={
      title={Current trends in arithmetical algebraic geometry},
      address={Arcata, Calif.},
      date={1985},
   },
   book={
      series={Contemp. Math.},
      volume={67},
      publisher={Amer. Math. Soc., Providence, RI},
   },
   date={1987},
   pages={1--24},
   review={\MR{902590}},
   doi={10.1090/conm/067/902590},
}

\bib{Ben}{article}{
   author={Benois, Denis},
   title={On extra zeros of $p$-adic $L$-functions: the crystalline case},
   conference={
      title={Iwasawa theory 2012},
   },
   book={
      series={Contrib. Math. Comput. Sci.},
      volume={7},
      publisher={Springer, Heidelberg},
   },
   date={2014},
   pages={65--133},
   review={\MR{3586811}},
}

\bib{BP-Planch}{article}{
   author={Beuzart-Plessis, Rapha\"{e}l},
   title={Plancherel formula for ${\rm GL}_n(F)\backslash {\rm GL}_n(E)$ and
   applications to the Ichino-Ikeda and formal degree conjectures for
   unitary groups},
   journal={Invent. Math.},
   volume={225},
   date={2021},
   number={1},
   pages={159--297},
   issn={0020-9910},
   review={\MR{4270666}},
   doi={10.1007/s00222-021-01032-6},
}

 \bib{BK}{article}{
   author={Bloch, Spencer},
   author={Kato, Kazuya},
   title={$L$-functions and Tamagawa numbers of motives},
   conference={
      title={The Grothendieck Festschrift, Vol.\ I},
   },
   book={
      series={Progr. Math.},
      volume={86},
      publisher={Birkh\"auser Boston, Boston, MA},
   },
   date={1990},
   pages={333--400},
   review={\MR{1086888 (92g:11063)}},
}

\bib{BST}{article}{
   author={Burungale, Ashay A.},
   author={Skinner, Christopher},
   author={Tian, Ye},
   title={The Birch and Swinnerton-Dyer conjecture: a brief survey},
   conference={
      title={Nine mathematical challenges---an elucidation},
   },
   book={
      series={Proc. Sympos. Pure Math.},
      volume={104},
      publisher={Amer. Math. Soc., Providence, RI},
   },
   year={2021},
   pages={11--29},
   review={\MR{4337415}}, 
}

\bib{carayol}{article}{
   author={Carayol, Henri},
   title={Sur la mauvaise r\'eduction des courbes de Shimura},
   language={French},
   journal={Compositio Math.},
   volume={59},
   date={1986},
   number={2},
   pages={151--230},
   issn={0010-437X},
   review={\MR{860139 (88a:11058)}},
}

\bib{Dplf}{article}{
   author={Disegni, Daniel},
   title={$p$-adic $L$-functions via local-global interpolation: the case of
   ${\rm GL}_2\times {\rm GU}(1)$},
   journal={Canad. J. Math.},
   volume={75},
   date={2023},
   number={3},
   pages={965--1017},
   issn={0008-414X},
   review={\MR{4586838}},
   doi={10.4153/S0008414X22000256},
}

\bib{D-euler}{article}{
   author={Disegni, Daniel},
   title={Euler systems for conjugate-symplectic motives}
, status={available at \url{https://disegni-daniel.perso.math.cnrs.fr/}},  label={Dis}}

%

\bib{DL}{article}{author={Disegni, Daniel}, author={Liu, Yifeng}, title={A $p$-adic arithmetic inner product formula}, journal ={Invent. math.},  volume={236} ,  date={2024},
   number={1},
   pages={219--371},}

\bib{DZ}{article}{author={Disegni, Daniel}, author={Zhang, Wei}, title={Gan--Gross--Prasad cycles and derivatives of $p$-adic $L$-functions}, status={preliminary version available at \url{https://disegni-daniel.perso.math.cnrs.fr/}}, label={DZ}, }

\bib{EHLS}{article}{
   author={Eischen, Ellen},
   author={Harris, Michael},
   author={Li, Jianshu},
   author={Skinner, Christopher},
   title={$p$-adic $L$-functions for unitary groups},
   journal={Forum Math. Pi},
   volume={8},
   date={2020},
   pages={e9, 160},
   review={\MR{4096618}},
   doi={10.1017/fmp.2020.4},
}

\bib{FM95}{article}{
   author={Fontaine, Jean-Marc},
   author={Mazur, Barry},
   title={Geometric Galois representations},
   conference={
      title={Elliptic curves, modular forms, \& Fermat's last theorem},
      address={Hong Kong},
      date={1993},
   },
   book={
      series={Ser. Number Theory},
      volume={I},
      publisher={Int. Press, Cambridge, MA},
   },
   isbn={1-57146-026-8},
   date={1995},
   pages={41--78},
   review={\MR{1363495}},
}

\bib{GS12}{article}{
   author={Gan, Wee Teck},
   author={Savin, Gordan},
   title={Representations of metaplectic groups I: epsilon dichotomy and
   local Langlands correspondence},
   journal={Compos. Math.},
   volume={148},
   date={2012},
   number={6},
   pages={1655--1694},
   issn={0010-437X},
   review={\MR{2999299}},
   doi={10.1112/S0010437X12000486},
}

\bib{GI14}{article}{
   author={Gan, Wee Teck},
   author={Ichino, Atsushi},
   title={Formal degrees and local theta correspondence},
   journal={Invent. Math.},
   volume={195},
   date={2014},
   number={3},
   pages={509--672},
   issn={0020-9910},
   review={\MR{3166215}},
   doi={10.1007/s00222-013-0460-5},
}

\bib{GI16}{article}{
   author={Gan, Wee Teck},
   author={Ichino, Atsushi},
   title={The Gross-Prasad conjecture and local theta correspondence},
   journal={Invent. Math.},
   volume={206},
   date={2016},
   number={3},
   pages={705--799},
   issn={0020-9910},
   review={\MR{3573972}},
   doi={10.1007/s00222-016-0662-8},
}
%

\bib{GRS}{book}{
   author={Ginzburg, David},
   author={Rallis, Stephen},
   author={Soudry, David},
   title={The descent map from automorphic representations of ${\rm GL}(n)$
   to classical groups},
   publisher={World Scientific Publishing Co. Pte. Ltd., Hackensack, NJ},
   date={2011},
   pages={x+339},
   isbn={978-981-4304-98-6},
   isbn={981-4304-98-0},
   review={\MR{2848523}},
   doi={10.1142/9789814304993},
}
	
\bib{GT}{article}{
   author={Gan, Wee Teck},
   author={Takeda, Shuichiro},
   title={A proof of the Howe duality conjecture},
   journal={J. Amer. Math. Soc.},
   volume={29},
   date={2016},
   number={2},
   pages={473--493},
   issn={0894-0347},
   review={\MR{3454380}},
   doi={10.1090/jams/839},
}

\bib{GG11}{article}{
   author={Gong, Z.},
   author={Greni\'{e}, L.},
   title={An inequality for local unitary theta correspondence},
   language={English, with English and French summaries},
   journal={Ann. Fac. Sci. Toulouse Math. (6)},
   volume={20},
   date={2011},
   number={1},
   pages={167--202},
   issn={0240-2963},
   review={\MR{2830396}},
}

\bib{gross-incoh}{article}{
   author={Gross, Benedict H.},
   title={Incoherent definite spaces and Shimura varieties}, 
   conference={title={Relative Trace Formulas}      },
   book={
      series={Simons Symposia},
publisher={Springer}, 
      place={Cham}}
      date={2021}
,pages={187--215},
   }

\bib{GZ}{article}{
   author={Gross, Benedict H.},
   author={Zagier, Don B.},
   title={Heegner points and derivatives of $L$-series},
   journal={Invent. Math.},
   volume={84},
   date={1986},
   number={2},
   pages={225--320},
   issn={0020-9910},
   review={\MR{833192 (87j:11057)}},
   doi={10.1007/BF01388809},  
}

\bib{GKZ}{article}{
   author={Gross, B.},
   author={Kohnen, W.},
   author={Zagier, D.},
   title={Heegner points and derivatives of $L$-series. II},
   journal={Math. Ann.},
   volume={278},
   date={1987},
   number={1-4},
   pages={497--562},
   issn={0025-5831},
   review={\MR{0909238}},
   doi={10.1007/BF01458081},
}

\bib{HKS}{article}{
   author={Harris, Michael},
   author={Kudla, Stephen S.},
   author={Sweet, William J.},
   title={Theta dichotomy for unitary groups},
   journal={J. Amer. Math. Soc.},
   volume={9},
   date={1996},
   number={4},
   pages={941--1004},
   issn={0894-0347},
   review={\MR{1327161}},
   doi={10.1090/S0894-0347-96-00198-1},
}

%

\bib{harris}{article}{
   author={Harris, Michael},
   title={Cohomological automorphic forms on unitary groups. II. Period
   relations and values of $L$-functions},
   conference={
      title={Harmonic analysis, group representations, automorphic forms and
      invariant theory},
   },
   book={
      series={Lect. Notes Ser. Inst. Math. Sci. Natl. Univ. Singap.},
      volume={12},
      publisher={World Sci. Publ., Hackensack, NJ},
   },
   date={2007},
   pages={89--149},
   review={\MR{2401812}},
}

\bib{Jo}{article}{
   author={Jochnowitz, Naomi},
   title={Congruences between systems of eigenvalues of modular forms},
   journal={Trans. Amer. Math. Soc.},
   volume={270},
   date={1982},
   number={1},
   pages={269--285},
   issn={0002-9947},
   review={\MR{642341}},
   doi={10.2307/1999772},
}

\bib{KMSW}{article}{author={Kaletha, Tasho}, author={Minguez, Alberto} , author={Shin,  Sug Woo Shin}, author={White,  Paul-James}
title ={Endoscopic classification of representations: Inner forms of unitary groups}, status={preprint}, label={KMSW}}

\bib{Kim}{article}{
   author={Kim, Chan-Ho},
   title={On the soft $p$-converse to a theorem of Gross-Zagier and
   Kolyvagin},
   journal={Math. Ann.},
   volume={387},
   date={2023},
   number={3-4},
   pages={1961--1968},
   issn={0025-5831},
   review={\MR{4657441}},
   doi={10.1007/s00208-022-02511-8},
}

\bib{KSZ}{article}{author={Kisin, Mark}, author={Shin,  Sug-Woo}, author={Zhu, Yihang}, title={The stable trace formula for Shimura varieties of abelian type}, status={arXiv:2110.05381}, label={KSZ}
}

\bib{Kol}{article}{
   author={Kolyvagin, V. A.},
   title={Finiteness of $E({\bf Q})$ and SH$(E,{\bf Q})$ for a subclass of
   Weil curves},
   language={Russian},
   journal={Izv. Akad. Nauk SSSR Ser. Mat.},
   volume={52},
   date={1988},
   number={3},
   pages={522--540, 670--671},
   issn={0373-2436},
   translation={
      journal={Math. USSR-Izv.},
      volume={32},
      date={1989},
      number={3},
      pages={523--541},
      issn={0025-5726},
   },
   review={\MR{954295 (89m:11056)}},
}

\bib{Koly}{article}{
   author={Kolyvagin, V. A.},
   title={Euler systems},
   conference={
      title={The Grothendieck Festschrift, Vol. II},
   },
   book={
      series={Progr. Math.},
      volume={87},
      publisher={Birkh\"{a}user Boston, Boston, MA},
   },
   date={1990},
   pages={435--483},
   review={\MR{1106906}},
}

\bib{Kud-Duke}{article}{
   author={Kudla, Stephen S.},
   title={Algebraic cycles on Shimura varieties of orthogonal type},
   journal={Duke Math. J.},
   volume={86},
   date={1997},
   number={1},
   pages={39--78},
   issn={0012-7094},
   review={\MR{1427845}},
   doi={10.1215/S0012-7094-97-08602-6},
}

\bib{Kud03}{article}{
   author={Kudla, Stephen S.},
   title={Modular forms and arithmetic geometry},
   conference={
      title={Current developments in mathematics, 2002},
   },
   book={
      publisher={Int. Press, Somerville, MA},
   },
   date={2003},
   pages={135--179},
   review={\MR{2062318}},
}

\bib{kudla-mod}{article}{
   author={Kudla, Stephen S.},
   title={Remarks on generating series for special cycles on orthogonal
   Shimura varieties},
   journal={Algebra Number Theory},
   volume={15},
   date={2021},
   number={10},
   pages={2403--2447},
   issn={1937-0652},
   review={\MR{4377855}},
   doi={10.2140/ant.2021.15.2403},
}

\bib{KRY06}{book}{
   author={Kudla, Stephen S.},
   author={Rapoport, Michael},
   author={Yang, Tonghai},
   title={Modular forms and special cycles on Shimura curves},
   series={Annals of Mathematics Studies},
   volume={161},
   publisher={Princeton University Press, Princeton, NJ},
   date={2006},
   pages={x+373},
   isbn={978-0-691-12551-0},
   isbn={0-691-12551-1},
   review={\MR{2220359}},
   doi={10.1515/9781400837168},
}

\bib{LR05}{article}{
   author={Lapid, Erez M.},
   author={Rallis, Stephen},
   title={On the local factors of representations of classical groups},
   conference={
      title={Automorphic representations, $L$-functions and applications:
      progress and prospects},
   },
   book={
      series={Ohio State Univ. Math. Res. Inst. Publ.},
      volume={11},
      publisher={de Gruyter, Berlin},
   },
   date={2005},
   pages={309--359},
   review={\MR{2192828}},
   doi={10.1515/9783110892703.309},
}

\bib{Chao}{article}{
   author={Li, Chao},
   title={From sum of two squares to arithmetic Siegel-Weil formulas},
   journal={Bull. Amer. Math. Soc. (N.S.)},
   volume={60},
   date={2023},
   number={3},
   pages={327--370},
   issn={0273-0979},
   review={\MR{4588043}},
   doi={10.1090/bull/1786},
}

\bib{LL}{article}{
   author={Li, Chao},
   author={Liu, Yifeng},
   title={Chow groups and $L$-derivatives of automorphic motives for unitary
   groups},
   journal={Ann. of Math. (2)},
   volume={194},
   date={2021},
   number={3},
   pages={817--901},
   issn={0003-486X},
   review={\MR{4334978}},
   doi={10.4007/annals.2021.194.3.6},
}

\bib{LL2}{article}{author={ Li, Chao}, author={Liu, Yifeng},
 title={Chow groups and L-derivatives of automorphic motives for unitary groups, II},
journal={Forum of Math. Pi}, volume={10}, date={2022}, pages={E5}}
%
%
%

\bib{Liu11}{article}{
   author={Liu, Yifeng},
   title={Arithmetic theta lifting and $L$-derivatives for unitary groups,
   I},
   journal={Algebra Number Theory},
   volume={5},
   date={2011},
   number={7},
   pages={849--921},
   issn={1937-0652},
   review={\MR{2928563}},
}

\bib{LiuII}{article}{
   author={Liu, Yifeng},
   title={Arithmetic theta lifting and $L$-derivatives for unitary groups,
   II},
   journal={Algebra Number Theory},
   volume={5},
   date={2011},
   number={7},
   pages={923--1000},
   issn={1937-0652},
   review={\MR{2928564}},
   doi={10.2140/ant.2011.5.923},
}

\bib{liu-fj}{article}{
   author={Liu, Yifeng},
   title={Fourier-Jacobi cycles and arithmetic relative trace formula (with
   an appendix by Chao Li and Yihang Zhu)},
   note={Appendix by Chao Li and Yihang Zhu},
   journal={Camb. J. Math.},
   volume={9},
   date={2021},
   number={1},
   pages={1--147},
   issn={2168-0930},
   review={\MR{4325259}},
   doi={10.4310/CJM.2021.v9.n1.a1},
}

\bib{LTXZZ}{article}{
   author={Liu, Yifeng},
   author={Tian, Yichao},
   author={Xiao, Liang},
   author={Zhang, Wei},
   author={Zhu, Xinwen},
   title={On the Beilinson-Bloch-Kato conjecture for Rankin-Selberg motives},
   journal={Invent. Math.},
   volume={228},
   date={2022},
   number={1},
   pages={107--375},
   issn={0020-9910},
   review={\MR{4392458}},
   doi={10.1007/s00222-021-01088-4},
}

\bib{LSZ}{article}{
   author={Loeffler, David},
   author={Skinner, Christopher},
   author={Zerbes, Sarah Livia},
   title={Euler systems for ${\rm GSp}(4)$},
   journal={J. Eur. Math. Soc. (JEMS)},
   volume={24},
   date={2022},
   number={2},
   pages={669--733},
   issn={1435-9855},
   review={\MR{4382481}},
   doi={10.4171/jems/1124},
}

\bib{McN}{book}{
   author={McNamara, Patrick},
   title={The neuroscience of sleep and dreams
},
series={Cambridge Fundamentals of Neuroscience in Psychology},
   publisher={Cambridge University Press, Cambridge,  UK},
   date={2019},
}

%
%

\bib{Min08}{article}{
   author={M\'{\i}nguez, Alberto},
   title={Correspondance de Howe explicite: paires duales de type II},
   language={French, with English and French summaries},
   journal={Ann. Sci. \'{E}c. Norm. Sup\'{e}r. (4)},
   volume={41},
   date={2008},
   number={5},
   pages={717--741},
   issn={0012-9593},
   review={\MR{2504432}},
   doi={10.24033/asens.2080},
}

\bib{Mok}{article}{
   author={Mok, Chung Pang},
   title={Endoscopic classification of representations of quasi-split
   unitary groups},
   journal={Mem. Amer. Math. Soc.},
   volume={235},
   date={2015},
   number={1108},
   pages={vi+248},
   issn={0065-9266},
   isbn={978-1-4704-1041-4},
   isbn={978-1-4704-2226-4},
   review={\MR{3338302}},
   doi={10.1090/memo/1108},
}

\bib{Mor}{article}{
   author={Morimoto, Kazuki},
   title={On the irreducibility of global descents for even unitary groups
   and its applications},
   journal={Trans. Amer. Math. Soc.},
   volume={370},
   date={2018},
   number={9},
   pages={6245--6295},
   issn={0002-9947},
   review={\MR{3814330}},
   doi={10.1090/tran/7119},
}

\bib{nek-CM}{article}{
   author={Nekov\'{a}\v{r}, Jan},
   title={The Euler system method for CM points on Shimura curves},
   conference={
      title={$L$-functions and Galois representations},
   },
   book={
      series={London Math. Soc. Lecture Note Ser.},
      volume={320},
      publisher={Cambridge Univ. Press, Cambridge},
   },
   date={2007},
   pages={471--547},
   review={\MR{2392363}},
   doi={10.1017/CBO9780511721267.014},
}

\bib{nek-height}{article}{
   author={Nekov{\'a}{\v{r}}, Jan},
   title={On $p$-adic height pairings},
   conference={
      title={S\'eminaire de Th\'eorie des Nombres, Paris, 1990--91},
   },
   book={
      series={Progr. Math.},
      volume={108},
      publisher={Birkh\"auser Boston},
      place={Boston, MA},
   },
   date={1993},
   pages={127--202},
   review={\MR{1263527 (95j:11050)}},
}

\bib{nek-AJ}{article}{
   author={Nekov\'a\v r, Jan},
   title={$p$-adic Abel-Jacobi maps and $p$-adic heights},
   conference={
      title={The arithmetic and geometry of algebraic cycles},
      address={Banff, AB},
      date={1998},
   },
   book={
      series={CRM Proc. Lecture Notes},
      volume={24},
      publisher={Amer. Math. Soc., Providence, RI},
   },
   date={2000},
   pages={367--379},
   review={\MR{1738867}},
}

\bib{nek-niz}{article}{
   author={Nekov\'a\v r, Jan},
   author={Nizio\l , Wies\l awa},
   title={Syntomic cohomology and $p$-adic regulators for varieties over
   $p$-adic fields},
   note={With appendices by Laurent Berger and Fr\'ed\'eric D\'eglise},
   journal={Algebra Number Theory},
   volume={10},
   date={2016},
   number={8},
   pages={1695--1790},
   issn={1937-0652},
   review={\MR{3556797}},
}


\bib{NT}{article}{
   author={Newton, James},
   author={Thorne, Jack},
   title={Symmetric power functoriality for Hilbert modular forms}, status={ arXiv:2212.03595}, label={NT}}
%
%

\bib{NY}{article}{
   author={Nishiyama, Kyo},
   author={Zhu, Chen-Bo},
   title={Theta lifting of holomorphic discrete series: the case of ${\rm
   U}(n,n)\times{\rm U}(p,q)$},
   journal={Trans. Amer. Math. Soc.},
   volume={353},
   date={2001},
   number={8},
   pages={3327--3345},
   issn={0002-9947},
   review={\MR{1828608}},
   doi={10.1090/S0002-9947-01-02830-6},
}

\bib{PT}{article}{
   author={Paul, Annegret},
   author={Trapa, Peter E.},
   title={One-dimensional representations of ${\rm U}(p,q)$ and the Howe
   correspondence},
   journal={J. Funct. Anal.},
   volume={195},
   date={2002},
   number={1},
   pages={129--166},
   issn={0022-1236},
   review={\MR{1934355}},
   doi={10.1006/jfan.2002.3974},
}

\bib{PR}{article}{
   author={Perrin-Riou, Bernadette},
   title={Points de Heegner et d\'eriv\'ees de fonctions $L$ $p$-adiques},
   language={French},
   journal={Invent. Math.},
   volume={89},
   date={1987},
   number={3},
   pages={455--510},
   issn={0020-9910},
   review={\MR{903381 (89d:11034)}},
   doi={10.1007/BF01388982},
}

\bib{PR-htIw}{article}{
   author={Perrin-Riou, Bernadette},
   title={Th\'{e}orie d'Iwasawa et hauteurs $p$-adiques},
   language={French},
   journal={Invent. Math.},
   volume={109},
   date={1992},
   number={1},
   pages={137--185},
   issn={0020-9910},
   review={\MR{1168369}},
   doi={10.1007/BF01232022},
}

\bib{PRbook}{book}{
   author={Perrin-Riou, Bernadette},
   title={Fonctions $L$ $p$-adiques des repr\'esentations $p$-adiques},
   language={French, with English and French summaries},
   journal={Ast\'erisque},
   number={229},
   date={1995},
   pages={198},
   issn={0303-1179},
   review={\MR{1327803 (96e:11062)}},
}

	\bib{Qiu}{article}{
   author={Qiu, Yannan},
   title={Generalized formal degree},
   journal={Int. Math. Res. Not. IMRN},
   date={2012},
   number={2},
   pages={239--298},
   issn={1073-7928},
   review={\MR{2876383}},
   doi={10.1093/imrn/rnr015},
}

\bib{Qiu2}{article}{
   author={Qiu, Yannan},
   title={Normalized local theta correspondence and the duality of inner
   product formulas},
   journal={Israel J. Math.},
   volume={191},
   date={2012},
   number={1},
   pages={227--278},
   issn={0021-2172},
   review={\MR{2970869}},
   doi={10.1007/s11856-011-0205-3},
}

\bib{Saito1}{article}{
   author={Saito, Takeshi},
   title={Modular forms and $p$-adic Hodge theory},
   journal={Invent. Math.},
   volume={129},
   date={1997},
   number={3},
   pages={607--620},
   issn={0020-9910},
   review={\MR{1465337}},
   doi={10.1007/s002220050175},
}

		\bib{Saito2}{article}{
   author={Saito, Takeshi},
   title={Weight-monodromy conjecture for $l$-adic representations
   associated to modular forms. A supplement to: ``Modular forms and
   $p$-adic Hodge theory'' [Invent. Math. {\bf 129} (1997), no. 3, 607--620;
   MR1465337 (98g:11060)]},
   conference={
      title={The arithmetic and geometry of algebraic cycles},
      address={Banff, AB},
      date={1998},
   },
   book={
      series={NATO Sci. Ser. C Math. Phys. Sci.},
      volume={548},
      publisher={Kluwer Acad. Publ., Dordrecht},
   },
   isbn={0-7923-6193-8},
   date={2000},
   pages={427--431},
   review={\MR{1744955}},
}
\bib{Sak}{article}{
   author={Sakellaridis, Yiannis},
   title={Plancherel decomposition of Howe duality and Euler factorization
   of automorphic functionals},
   conference={
      title={Representation theory, number theory, and invariant theory},
   },
   book={
      series={Progr. Math.},
      volume={323},
   },
   date={2017},
   pages={545--585},
   review={\MR{3753923}},
   doi={10.1007/978-3-319-59728-7\_18},
}

\bib{STay}{article}{
author={Sempliner, Jack}, author={Taylor, Richard},
title={On the formalism of Shimura varieties}, status={preprint}, label={ST}}

\bib{Skinner}{article}{
author={Skinner, Christopher},
title={Anticyclotomic Euler Systems}, status={Seminar at MSRI/SLMath, 30/03/203, recorded at \url{https://www.slmath.org/seminars/27455/schedules/33323} }, label={Ski}}

\bib{Tat65}{article}{
   author={Tate, John T.},
   title={Algebraic cycles and poles of zeta functions},
   conference={
      title={Arithmetical Algebraic Geometry},
      address={Proc. Conf. Purdue Univ.},
      date={1963},
   },
   book={
      publisher={Harper \& Row, New York},
   },
   date={1965},
   pages={93--110},
   review={\MR{0225778}},
}

\bib{tate-nt}{article}{author= {Tate, John},
     title = {Number theoretic background},
 conference = {title={Automorphic forms, representations and {$L$}-functions}, place= { {C}orvallis, {O}re.,},year={1977},}
       book={
      series={Proc. Sympos. Pure Math.}
      volume={XXXIII},
      publisher={Amer. Math. Soc.},
      place={Providence, R.I.},
   },
   year={1979}
     pages = {3--26},
}


\bib{Varma}{article}{
   author={Varma, Sandeep},
   title={On descent and the generic packet conjecture},
   journal={Forum Math.},
   volume={29},
   date={2017},
   number={1},
   pages={111--155},
   issn={0933-7741},
   review={\MR{3592596}},
   doi={10.1515/forum-2015-0113},
}

\bib{Yam14}{article}{
   author={Yamana, Shunsuke},
   title={L-functions and theta correspondence for classical groups},
   journal={Invent. Math.},
   volume={196},
   date={2014},
   number={3},
   pages={651--732},
   issn={0020-9910},
   review={\MR{3211043}},
   doi={10.1007/s00222-013-0476-x},
}

\bib{Wald-th}{article}{
   author={Waldspurger, J.-L.},
   title={D\'{e}monstration d'une conjecture de dualit\'{e} de Howe dans le cas
   $p$-adique, $p\neq 2$},
   language={French},
   conference={
      title={Festschrift in honor of I. I. Piatetski-Shapiro on the occasion
      of his sixtieth birthday, Part I},
      address={Ramat Aviv},
      date={1989},
   },
   book={
      series={Israel Math. Conf. Proc.},
      volume={2},
      publisher={Weizmann, Jerusalem},
   },
   date={1990},
   pages={267--324},
   review={\MR{1159105}},
}

\bib{Xia}{article}{
author={Xia, Jiacheng}, 
 title={Some cases of Kudla's modularity conjecture for unitary Shimura varieties}, volume={10}, DOI={10.1017/fms.2022.26}, journal={Forum of Mathematics, Sigma}, publisher={Cambridge University Press},  year={2022}, pages={e37}}

\bib{YZZ}{book}{
     title = {The Gross-Zagier Formula on Shimura Curves},  
     subtitle = {},     
     edition = {},       
     author = {Yuan, Xinyi},author = {Zhang, Shou-Wu},author = {Zhang, Wei},
     editor = {},     
     volume = {184},     
     series = {Annals of Mathematics Studies},  
     pages = {272},         
     place={Princeton, NJ},
     date = {2012},      
     publisher = {Princeton University Press},         
     }

\end{biblist}
\end{bibdiv}

\end{document}